\documentclass[12pt]{article}
\usepackage[hmargin=1in,vmargin=1in]{geometry} 

\usepackage{verbatim}
\usepackage{amsmath}
 \usepackage{amssymb}
\usepackage{amsthm}
\usepackage{tikz}
\usepackage{graphicx}
\usepackage{color,hyperref}
\definecolor{darkblue}{rgb}{0.0,0.0,0.3}
\hypersetup{colorlinks,breaklinks,
            linkcolor=darkblue,urlcolor=darkblue,
            anchorcolor=darkblue,citecolor=darkblue}

\geometry{letterpaper} 


\newtheorem{thm}{Theorem}
\newtheorem{prop}[thm]{Proposition}
\newtheorem{lem}[thm]{Lemma}
\newtheorem{cor}[thm]{Corollary}

\newtheorem{definition}[thm]{Definition}

\newtheorem{ex}[thm]{Example}

\theoremstyle{definition}
\newtheorem{rem}[thm]{Remark}


\newcommand{\N}{\mathbb{N}}

\newcommand{\K}{\mathbb{K}}

\newcommand{\A}{\mathcal{A}}

\newcommand{\D}{\mathcal{D}}
\newcommand{\E}{\mathcal{E}}
\newcommand{\F}{\mathcal{F}}
\renewcommand{\H}{\mathcal{H}}

\newcommand{\Q}{\mathcal{Q}}

\renewcommand{\P}{\mathbb{P}}

\newcommand{\ree}[1]{(\ref{#1})}

\title {Combinatorial Hopf Algebras of  Simplicial Complexes}
\author{Carolina Benedetti,  Joshua Hallam, John Machacek}


\begin{document}

\maketitle
\begin{abstract}
We consider a Hopf algebra of simplicial complexes and provide a cancellation-free formula for its antipode. We then obtain a family of combinatorial Hopf algebras by defining a family of characters on this Hopf algebra.  The characters of these combinatorial Hopf algebras give rise to symmetric functions that encode information about colorings of simplicial complexes and their $f$-vectors. We also use characters to give a generalization of Stanley's $(-1)$-color theorem. A $q$-analogue version of this family of characters is also studied.

\end{abstract}
%
%
\section{Introduction}

As defined in~\cite{abs:chagdse}, a \emph{combinatorial Hopf algebra} is a pair $(\mathcal{H}, \zeta)$ where $\mathcal{H}$ is a graded connected Hopf algebra over some field, $\K$, and $\zeta: \mathcal{H} \to\K$ is an algebra map called a \emph{character of $\mathcal H$}.  Combinatorial Hopf algebras (CHAs) typically  have bases   indexed by  combinatorial objects.  Moreover, characters of a CHA often give rise to enumerative information about these combinatorial objects.

The emerging field of combinatorial Hopf algebras provides an appropriate environment to study subjects with a rich combinatorial structure that originate in other areas of mathematics such as topology, algebra, and geometry. In this paper we study simplicial complexes by endowing them with a Hopf algebra structure. 
This Hopf algebra, which we denote by $\A$, turns out to be a sub Hopf algebra of the hypergraph Hopf algebra studied in~\cite{gsj:gdsrh}. Moreover, the Hopf algebra of graphs $\mathcal G$, studied in \cite{s:iha,hm:ihag,bs:ai}, is a sub Hopf algebra of $\A$. For each positive integer $s$, we define a CHA structure on $\A$.
One of the main results of this paper provides a cancellation-free formula for the antipode of $\A$, and we show how this antipode generalizes the one for $\mathcal G$. 

A beautiful result in~\cite{abs:chagdse} associates a quasisymmetric function to every element in a CHA. This quasisymmetric function often encodes important information about the CHA. In the case of graphs, one can obtain Stanley's chromatic symmetric function. In our case, the quasisymmetric functions that we obtain encode colorings of the simplicial complex as well as its  $f$-vector. We should point out that the $f$-vector is recovered using several of these quasisymmetric functions associated to the simplicial complex. This is one advantage of having a family of CHAs instead using just one.  These quasisymmetric functions give rise to polynomials via principal specializations. They were used in~\cite{dmn:vcsc} to study vertex colorings of simplicial complexes in connection with polynomial identities in the Stanley-Reisner ring associated to the simplicial complex. Here we use the polynomials to give a generalization of Stanley's $(-1)$-color theorem \cite{s:aog} for colorings of simplicial complexes.

The paper is organized as follows. In Section \ref{basics} we review the definitions of combinatorial Hopf algebras and simplicial complexes. We then introduce the Hopf algebra, $\A$, of simplicial complexes and analyze its space of primitive elements.  Section~\ref{anitpodeSec} provides a cancellation-free formula for the antipode of $\A$.  Section~\ref{charAndColorSec} introduces a family of characters $\{\zeta_s\}_{s>0}$ on $\A$ giving rise to families of combinatorial Hopf algebras. Using these characters, we explore the quasisymmetric functions associated to them.   In particular, the power sum expansion of these quasisymmetric functions allows us to recover the $f$-vector. In Section~\ref{evenodd} we provide partial results towards understanding the even and odd subalgebras of $(\A,\zeta_s)$. Finally, in Section~\ref{qanalogue} we define a $q$-analogue of the characters $\zeta_s$, and thus we obtain $q$-analogues for the quasisymmetric functions encoding colorings of simplicial complexes. We then provide an infinite family of unicyclic graphs that can be distinguished using this $q$-analogue, but cannot be distinguished by the chromatic symmetric function. We finish by considering principal specializations  that allow us to obtain certain combinatorial identities.
\section{A Hopf algebra of simplicial complexes}\label{basics}

\subsection{Hopf algebra basics}
We now review some background material on Hopf algebras.  For a more complete study of this topic, the reader is encouraged to see~\cite{dr:hac}.  Let $\mathcal H$ be a vector space over a field $\mathbb K$. Throughout the paper we will assume that $char(\K)=0$. Let $\text{Id}$ be the identity map on $\mathcal H$. We call $\mathcal H$ an associative $\K$-algebra with unit $1$ when $\mathcal H$ is equipped with a $\K$-linear map $m:\mathcal H\otimes\mathcal H\rightarrow\mathcal H$ and an element $1\in\mathcal H$ satisfying
\begin{align*}
m\circ(m\otimes \text{Id}) &= m\circ(\text{Id}\otimes m);\\
m\circ(\text{Id}\otimes u) &= m\circ(u\otimes\text{Id}) = \text{Id}.
\end{align*}
Here, $u$ stands for the $\K$-linear map $ \mathbb K\rightarrow\mathcal H$ defined by $ t\mapsto t\cdot 1$.

A \emph{coalgebra} is a vector space $\D$ over $\mathbb K$  equipped with a coproduct $\Delta:  \D\rightarrow \D\otimes \D$ and a counit $\epsilon:\D\rightarrow \mathbb K$.  Both $\Delta$ and $\epsilon$ must be $\mathbb K$-linear maps. The coproduct  is coassociative so that $(\Delta\otimes \text{Id})\circ\Delta=(\text{Id}\otimes\Delta)\circ\Delta$ and must be compatible with $\epsilon$.  That is,
$$
(\epsilon\otimes \text{Id})\circ\Delta=(\text{Id}\otimes\epsilon)\circ\Delta=\text{Id}.
$$

If an algebra $(\mathcal H,m,u)$ is also equipped with a coalgebra structure given by $\Delta$ and $\epsilon$, then we say that $\mathcal H$ is a \emph{bialgebra} provided $\Delta$ and $\epsilon$ are algebra homomorphisms.

The maps $m$ and $\Delta$ can be applied iteratively as follows. Letting $\mathcal H^{\otimes k}=\mathcal H\otimes\cdots\otimes\mathcal H$ denote the $k$-fold tensor, define the iterated product map $m^{(k-1)}:\mathcal H^{\otimes k}\rightarrow\mathcal H$ inductively by setting $m^{(-1)}=u$, $m^{(0)}=$ \text{Id} and for $k\geq 1$ let $m^{(k)}=m\circ (\text{Id}\otimes m^{(k-1)})$. Similarly, the iterated coproduct map $\Delta^{(k-1)}:\mathcal H\rightarrow \mathcal H^{\otimes k}$ is given inductively by $\Delta^{(k)}=(\text{Id}\otimes \Delta^{(k-1)})\circ\Delta$, where $\Delta^{(-1)}=\epsilon$ and $\Delta^{(0)}=$ Id.

\begin{definition}\label{hopf_algebra}
A \emph{Hopf algebra} $\mathcal H$ is a $\K$-bialgebra together with a $\K$-linear map $S:\mathcal H\rightarrow\mathcal H$ called the antipode. This map must satisfy the following
\begin{equation*}
m\circ(S\otimes \text{Id})\circ\Delta = m\circ(\text{Id}\otimes S)\circ\Delta = u\circ\epsilon.
\end{equation*}
\end{definition}

\begin{rem}
The definition of the antipode given above is rather superficial. The antipode is in fact the inverse of the identity map on $\H$ under the convolution product defined on $\K$-linear maps $f,g:\mathcal H\rightarrow \mathcal{H}$ by $fg:=m\circ(f\otimes g)\circ\Delta$. In fact, given any $\K$-algebra $A$ and $\K$-coalgebra $C$, the convolution product endows the $\K$-linear maps Hom$(C,A)$ with an algebra structure (see \cite[Definition 1.27]{dr:hac}).
\end{rem}

We say that a bialgebra $\mathcal H$ is \emph{graded} if it is decomposed into a direct sum
$$
\mathcal H=\bigoplus_{n\geq 0}H_n
$$
where $m(H_i\otimes H_j)\subseteq H_{i+j}$, $u(\mathbb K)\subseteq H_0$, $\Delta (H_n)\subseteq\bigoplus_{i=0}^nH_i\otimes H_{n-i}$, and  $\epsilon (H_n)=0$ for $n\geq 1$. We call $\mathcal H$ \emph{connected} if $H_0\cong\mathbb K$. For each $n\geq 0$ we refer to elements in $H_n$ as \emph{homogeneous elements of degree $n$.}

Any graded and connected $\K$-bialgebra is a Hopf algebra since the antipode can be defined recursively. In many instances computing the antipode of a given Hopf algebra is a very difficult problem. However, we will provide an explicit cancellation-free formula for the antipode in the Hopf algebra of finite simplicial complexes that we  study here. Now we will introduce some basic concepts about the combinatorial objects we are interested in.

\subsection{Simplicial complexes}

A finite \emph{(abstract) simplicial complex}, $\Gamma$, is a  nonempty collection of subsets of some finite  set $V$ such that $\{v\} \in \Gamma$ for all $v\in V,$ and $X\in \Gamma$ implies $Y\in \Gamma$ for all $Y\subseteq X$.
By convention all our simplicial complexes contain the empty set.
We denote by $\varnothing $ the simplicial complex with empty vertex set.  So $\varnothing$ is the unique simplicial complex whose vertex set is empty and whose only face is the empty set.
The elements of $\Gamma$ are called \emph{faces} and the maximal (with respect to inclusion) faces are called \emph{facets}.
Notice that the facets completely determine the simplicial complex. If $X$ is a face of $\Gamma$ then the  \emph{dimension} of  $X$ is $\dim X = |X|-1$. A face of dimension $s$ is called an \emph{$s$-simplex}. The faces of dimension 0 are called  \emph{vertices} of $\Gamma$ and the set of vertices will be denoted $V(\Gamma)$ where we identify $\{v\}$ with $v$. For instance, if $\Gamma$ has facets $\{1,2,3\}$ and $\{3,4\}$ then $V(\Gamma)=\{1,2,3,4\}$. The dimension of $\Gamma$, written as $\dim \Gamma$,  is the maximum of the dimensions of its facets. 

If $\Gamma$ and $\Theta$ are simplicial complexes with disjoint vertex sets $V_1$ and $V_2$, the \emph{disjoint union} of $\Gamma$ and $\Theta$  is the simplicial complex $\Gamma\uplus\Theta$ with vertex set $V_1\uplus V_2$ and faces $X$ such that  $X\in \Gamma$ or $X\in \Theta$.  If $k$ is a nonnegative integer, the \emph{k-skeleton} of $\Gamma$ is the collection of faces of $\Gamma$ with dimension no greater than $k$.  We will denote the $k$-skeleton of $\Gamma$ by $\Gamma^{(k)}$.   For example, if $\Gamma$ has facets $\{1,2,3\}$ and $\{3,4\}$, then $\Gamma^{(1)}$ is the simplicial complex with facets $\{1,2\},\{1,3\}, \{2,3\}$ and $\{3,4\}$.  
Figure~\ref{OneSkelEx} provides a pictorial representation of this example. Notice that a simple graph gives rise to a simplicial complex of dimension 1 or less. Conversely, a simplicial complex of dimension 1 or less can be thought of as a simple graph.
  
\begin{figure} 
\centering
\includegraphics[width=2.5in]{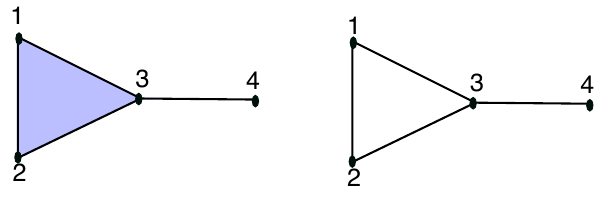}
\caption[OneSkelEx] {A simplicial complex $\Gamma$ and its 1-skeleton $\Gamma^{(1)}$.}\label{OneSkelEx}
\end{figure}  
  
Let $\Gamma$ and $V(\Gamma)$ be defined as above. Given $T\subseteq V(\Gamma)$, define the \emph{induced simplicial complex} of $\Gamma$ on $T$, denoted by $\Gamma_{T}$, to be the simplicial complex  with faces $\{X \cap T \mid X \in \Gamma\}$. So if we return to our example with $\Gamma$ having facets $\{1,2,3\}, \{3,4\}$ and if $T = \{1,3,4\}$, then $\Gamma_T$ has facets $\{1,3\}$ and $\{3,4\}$.

Now we define a Hopf algebra structure on simplicial complexes. Let $\A=\bigoplus_{n\geq 0}A_n$ where $A_n$ is the free $\K$-vector space on the set of isomorphism classes of simplicial complexes on $n$ vertices. Given a simplicial complex, $\Gamma$, we will denote its isomorphism class by $[\Gamma]$.

Define the product $m: \A\otimes \A \rightarrow \A$ by
$$
m\left([\Gamma]\otimes [\Theta]\right) = [\Gamma \uplus \Theta].
$$
Notice that with this multiplication, the unit $u:\K\rightarrow\A$ is given by
$$
u(1)  = [\varnothing].
$$
The coproduct $\Delta: \A\rightarrow \A\otimes \A$, is given by
$$
\Delta([\Gamma]) = \sum_{T \subseteq V(\Gamma)}  [\Gamma _T]\otimes [\Gamma _{V(\Gamma) \setminus T}].
$$
Additionally, define the counit of $\A$ by
$$
\epsilon([\Gamma])  = \delta_{[\Gamma], [\varnothing]}
$$
where $ \delta_{[\Gamma], [\varnothing]}$ is the Kronecker delta.

It follows that $\A$ is a graded, connected $\K$-bialgebra and hence a Hopf algebra. Also, it is not hard to see that $\A$ is commutative and cocommutative. 
From now on, we will drop the brackets from the notation $[\Gamma]$, keeping in mind that we are considering isomorphism classes of simplicial complexes.
In the next section, we turn our attention to the antipode of the Hopf algebra $\A$  and we  provide a cancellation-free formula for it.

\begin{rem}
For the reader interested in the space of primitives of the Hopf algebra $\A$, we remark that a projection onto the space of primitives can be obtained using~\cite[Theorem 10.1]{s:iha}.
\end{rem}

\section{A cancellation-free formula for the antipode}\label{anitpodeSec}

Before stating the main result in this section, we review some basic concepts from graph theory. Suppose $G=(V,E)$ is a graph. A subset $U$ of $V$ is called \emph{stable} if there is no edge between any pair of vertices in $U$.  A \emph{flat}, $F$,  of $G$ is a collection of edges such that in the graph  with vertex set $V$ and edge set $F$, each connected component is an induced subgraph of $G$. If $F$ is a flat then we will denote the subgraph of $G$ with vertex set $V$ and edge set $F$ by $G_{V,F}$ and its number of connected components by $c(F)$.  The set of flats of a graph $G$ will be denoted by $\F(G)$. We denote by $G/F$ the graph obtained from $G$ by contracting the edges in $F$. Recall that an orientation of a graph is called \emph{acyclic} if it does not contain any directed cycles.  The number of acyclic orientations of a graph $G$ will be denoted by $a(G)$. Given an orientation $O$ of $G$, a vertex $v\in V$ is called a \emph{source of $O$} if for every edge $\{v,u\}\in E$, $\{v,u\}$ is oriented away from $v$.

 Let $\Gamma$ be a simplicial complex. Any face $X$ of $\Gamma$ gives rise to a simplicial complex, namely, the simplicial complex formed by all the subsets of $X$. Given a flat $F$ in $\Gamma^{(1)}$ define  $\Gamma_{V,F}$ to be the subcomplex of $\Gamma$, with vertex set $V=V(\Gamma)$, such that $$\Gamma_{V,F}= \{X\in\Gamma\mid X^{(1)}\subset F\}.  $$ 
For example, if we again take $\Gamma$ to have facets $\{1,2,3\},\{3,4\}$ and let $F=\{ \{1,2\},\{1,3\},\{2,3\}\}$ then $\Gamma_{V,F}$ is the simplicial complex with facets $\{1,2,3\},\{4\}$.



In~\cite[Theorem 3.1]{hm:ihag}, the authors provide a cancellation-free formula for the antipode of graphs using induction. Aguiar  and Ardila have also recovered such antipode formula using Hopf monoids~\cite{aa:hmgp}. On the other hand, in \cite[Theorem 7.1]{bs:ai} the authors make use of sign-reversing involutions on combinatorial objects to obtain cancellation-free formulas for antipodes of several Hopf algebras including the graph Hopf algebra. It is worth pointing out that obtaining such a formula is, in general, a difficult problem when studying Hopf algebras arising in combinatorics. However, in our situation, we are able to  use the proof given in  \cite[Theorem 7.1]{bs:ai} to our particular case giving rise to Theorem \ref{anti:thm} below. 

\begin{thm}\label{anti:thm} Let $\Gamma\in A _n$ be a simplicial complex where $n\geq 1$.  Then 
$$S(\Gamma) = \sum_{F \in \F(\Gamma^{(1)})} (-1)^{c(F)} a(\Gamma^{(1)}/F) \Gamma_{V,F}$$
\label{thm:antipode}
where the sum runs over all flats of the 1-skeleton of $\Gamma$.
\end{thm}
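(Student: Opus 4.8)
The plan is to start from Takeuchi's formula for the antipode of a graded connected bialgebra, which for $\A$ expands $S(\Gamma)$ as an alternating sum over ordered set partitions of the vertex set. Writing $\bar\Delta$ for the reduced coproduct and iterating, one obtains
\[
S(\Gamma) = \sum_{k=1}^{n} (-1)^k \sum_{(V_1,\dots,V_k)} \Gamma_{V_1} \uplus \cdots \uplus \Gamma_{V_k},
\]
where the inner sum runs over all ordered set partitions $(V_1,\dots,V_k)$ of $V(\Gamma)$ into $k$ nonempty blocks. This expression has massive cancellation, and the goal is to organize the terms so that the surviving contributions are indexed by flats of $\Gamma^{(1)}$ together with acyclic orientations, exactly as in the graph case of \cite{bs:ai}.

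The key structural observation, and the one that reduces everything to the $1$-skeleton, is that each summand depends only on the flat determined by the partition. Given an ordered set partition $\pi=(V_1,\dots,V_k)$, let $F(\pi)$ be the set of edges of $\Gamma^{(1)}$ lying inside a single block; refining each block into its connected components in $\Gamma^{(1)}$ shows that $F(\pi)$ is a flat. I would then prove the lemma
\[
\Gamma_{V_1} \uplus \cdots \uplus \Gamma_{V_k} = \Gamma_{V, F(\pi)}.
\]
This holds because a face $X\in\Gamma$ lies in some block $V_i$ if and only if all of its edges lie in $F(\pi)$ (a simplex is spanned by its edges, so its vertices share a block exactly when its edges are within-block edges), and this is precisely the defining condition $X^{(1)}\subseteq \Gamma_{V,F(\pi)}^{(1)}$ of the induced subcomplex. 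Here the specific definition of $\Gamma_{V,F}$ is essential: the higher-dimensional faces are simply carried along by their $1$-skeletons, so no information beyond the flat survives.

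With this lemma, grouping the Takeuchi sum by flats gives
\[
S(\Gamma) = \sum_{F \in \F(\Gamma^{(1)})} \Bigl( \sum_{\pi \,:\, F(\pi)=F} (-1)^{k(\pi)} \Bigr) \Gamma_{V,F},
\]
where $k(\pi)$ denotes the number of blocks of $\pi$, so it remains to identify the inner coefficient with $(-1)^{c(F)} a(\Gamma^{(1)}/F)$; this is now a statement purely about the graph $\Gamma^{(1)}$. I would argue that the partitions $\pi$ with $F(\pi)=F$ are exactly those each of whose blocks is a union of connected components of $F$ which, viewed as a set of vertices of the contracted graph $\Gamma^{(1)}/F$, is independent. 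These are in bijection with ordered set partitions of $V(\Gamma^{(1)}/F)$ into nonempty independent sets. Summing $(-1)^{k}$ over such partitions equals, via the expansion $P_{\Gamma^{(1)}/F}(t)=\sum_k a_k\binom{t}{k}$ of the chromatic polynomial in binomial coefficients evaluated at $t=-1$, the value $P_{\Gamma^{(1)}/F}(-1) = (-1)^{|V(\Gamma^{(1)}/F)|} a(\Gamma^{(1)}/F) = (-1)^{c(F)} a(\Gamma^{(1)}/F)$ by Stanley's reciprocity theorem for acyclic orientations \cite{s:aog}, since $|V(\Gamma^{(1)}/F)| = c(F)$.

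I expect the main obstacle to be the clean proof of the structural lemma collapsing each term to $\Gamma_{V,F(\pi)}$, together with verifying that the grouping introduces no double counting across the simplicial layer; once the problem is pushed down to $\Gamma^{(1)}$ the coefficient identity is classical. Alternatively, following \cite{bs:ai} directly, one could replace the chromatic-polynomial computation by an explicit sign-reversing involution on the ordered set partitions contributing to a fixed flat, whose fixed points biject with the acyclic orientations of $\Gamma^{(1)}/F$. The \emph{slight modification} alluded to in the text is then precisely the verification that this involution, which only moves vertices between blocks, leaves the associated simplicial complex $\Gamma_{V,F}$ unchanged.
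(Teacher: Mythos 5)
Your proof is correct, but it does not follow the paper's own route --- in fact the paper gives no proof at all: it explicitly omits the argument, deferring to a ``slight modification'' of the sign-reversing-involution proof of \cite[Theorem 3.1]{bs:ai}. Your primary argument (Takeuchi's formula, the collapse lemma $\Gamma_{V_1}\uplus\cdots\uplus\Gamma_{V_k}=\Gamma_{V,F(\pi)}$, grouping terms by the flat $F(\pi)$, and evaluating the resulting coefficient as $\chi_{\Gamma^{(1)}/F}(-1)=(-1)^{c(F)}\,a(\Gamma^{(1)}/F)$ via Stanley's reciprocity \cite{s:aog}) is instead essentially the original Humpert--Martin proof of the graph case \cite{hm:ihag}, lifted to simplicial complexes. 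The two routes share the one ingredient that is genuinely new in the simplicial setting, and you isolate it correctly: since every $2$-subset of a face of $\Gamma$ is an edge of $\Gamma^{(1)}$, a face lies inside a single block if and only if its $1$-skeleton lies in $F(\pi)$, so each Takeuchi term depends only on the flat and equals $\Gamma_{V,F(\pi)}$; as you note at the end, this is exactly the ``slight modification'' needed to run either the involution of \cite{bs:ai} or the Humpert--Martin computation. Where the routes differ is in how the coefficient $\sum_{\pi:\,F(\pi)=F}(-1)^{k(\pi)}$ is handled: the involution cancels terms bijectively, with fixed points counted by $a(\Gamma^{(1)}/F)$, and is self-contained, whereas you quote Stanley's $(-1)$-theorem as a black box. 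Your way is shorter; the involution way has the expository advantage, within this paper, that Theorem~\ref{genacyclic} (whose $s=1$ case \emph{is} Stanley's theorem) then emerges as a genuine consequence of the antipode formula rather than a restatement of one of its inputs. There is no actual circularity in your version, since Stanley's theorem has proofs independent of any Hopf-algebraic machinery, but the dependency is worth keeping in mind.
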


\begin{proof}
The strategy to show this result is to define, for every acyclic orientation of each of the graphs $\Gamma^{(1)}/F$, a sign-reversing involution with a unique fixed point. We will only illustrate the proof when $F$ is the empty flat. Namely, we will show in this case that the coefficient of $\Gamma_{V,F}$ equals $a(\Gamma^{(1)})$. At the end of the proof, we will explain how to extend this proof when $F\neq \emptyset$.

Denote by $\mathcal{O}(\Gamma^{(1)})$ the set of acyclic orientations of the graph $\Gamma^{(1)}$ and identify the vertex set $V$ of $\Gamma$ with $[n]:=\{1,2,\dots, n\}$. Using Takeuchi's formula for the antipode in a Hopf algebra (see~\cite{t:fhagc}), given $\Gamma\in A_n$ we obtain
\begin{equation}\label{anti}
S(\Gamma)=\sum_{(V_1,\dots,V_{\ell})\models[n]} (-1)^{\ell}\,\Gamma_{V_1}\uplus\cdots\uplus\Gamma_{V_{\ell}}
\end{equation}
summing over all ordered set partitions $(V_1,\dots,V_{\ell})$ of $[n]$ where all of the $V_i$ are nonempty. Notice that Takeuchi's formula does not provide a cancellation-free expression for the antipode in general.

A term in (\ref{anti}) can be thought as the union of simplicial subcomplexes of $\Gamma$ such that $\Gamma_{V_i}^{(1)}$ is an induced subgraph of $\Gamma^{(1)}$. In particular, notice that when $(V_1,\dots,V_{\ell})$ is such that $\dim(\Gamma_{V_i})=0$ for each $i$, then $\Gamma_{V_1}\uplus\cdots\uplus\Gamma_{V_{\ell}}$ is (isomorphic to) the zero-dimensional subcomplex of $\Gamma$ on the set $[n]$ denoted by $\Gamma_{[n],\emptyset}$.  Note that different ordered set partitions $(V_1,\dots,V_{\ell})\models[n]$ may contribute to the coefficient of $\Gamma_{[n],\emptyset}$ in $(\ref{anti})$. 

 Let $A_{\emptyset}=\{(V_1,\dots,V_{\ell})\models [n] \; | \; \Gamma_{V_1}\uplus\cdots\uplus\Gamma_{V_{\ell}} = \Gamma_{[n],\emptyset}\}$ and define the function
\begin{equation*}
 \rho:A_{\emptyset}\rightarrow \mathcal O(\Gamma^{(1)})
\end{equation*}
 that assigns to $(V_1,\dots,V_{\ell})\in A_{\emptyset}$ an orientation in $\mathcal O(\Gamma^{(1)})$ to each edge $\{i,j\}$ in $\Gamma^{(1)}$ as follows:
\begin{equation*}
 i\rightarrow j\text{\;\;\;\; if \;\;\;\;}i\in V_r, j\in V_s \text{\;\;\;\; and \;\;\;\;}r<s.
 \end{equation*}

Now, given $\sigma=(V_1,\dots,V_{\ell})$ define the \emph{sign} of $\sigma$ to be $sign(\sigma)=(-1)^{\ell}$. Let $O\in\mathcal O(\Gamma^{(1)})$. We will think of $O$ not just as an acyclic orientation but also as the directed graph it induces on the vertex set $[n]$. Such $O$ gives rise to a canonical ordered set partition of $[n]$ in the following manner. Let $v_1$ be the biggest   source of $O_1=O$, then let $v_2$ be the biggest source of $O_2=O_1-\{v_1\}$,  and in general let $v_{k+1}$ be the biggest source of $O_{k+1}=O_k-\{v_k\}$ for $k=0,\dots,n-1$. Then we obtain the ordered set partition $\pi_O=(\{v_1\},\dots, \{v_n\})\models [n]$ such that $v_i$ is the largest source in $O_i$. Since $\rho(\pi_O)=O$, $\rho$ is a surjection. 

For fixed $O\in\mathcal O(\Gamma^{(1)})$ define a sign reversing involution $\iota_O$ on the set $\rho^{-1}(O)$ in the following way.   Set $\iota_O(\pi_O)=\pi_O$. For $\sigma=(V_1,\dots,V_{\ell})\in \rho^{-1}(O)$ such that $\sigma\neq\pi_O$ let $i$ be the smallest index such that $V_i\neq\{ v_i\}$, where $\left(\{v_1\},\dots, \{v_n\}\right)=\pi_O$ as above. The choice of $i$ implies that $v_i\in V_i\cup\cdots\cup V_{\ell}$. Let $V_j$ be the block in $\sigma$ containing $v_i$. If $|V_j|>1$ define
\begin{equation*}
\iota_O(\sigma) = (V_1,\dots,V_{j-1},V_j-\{v_i\},\{v_i\},V_{j+1},\dots,V_{\ell})
\end{equation*}
Otherwise, if $|V_j|=1$ define
\begin{equation*}
\iota_O(\sigma) = (V_1,\dots,V_{j-2},V_{j-1}\cup V_j,V_{j+1},\dots,V_{\ell}).
\end{equation*}
In the latter case, since $v_i$ is the largest source in $O_i$, the vertices in $V_{j-1}$ are vertices in $O_i$ as well and hence, $V_{j-1}\cup V_j$ is a stable set of vertices. Notice that in both cases, $sign(\iota_O(\sigma))=-sign(\sigma)$. Moreover, $\iota_O(\iota_O(\sigma))=\sigma$ and $\pi_O$ is the unique fixed point of $\iota_O$.
We conclude that for each acyclic orientation $O$, the involution $\iota_O$ has a unique fixed point. Hence the coefficient of $\Gamma_{[n],\emptyset}$ in (\ref{anti}) is $(-1)^na(\Gamma^{(1)})$.

The proof for the coefficient of $\Gamma_{V,F}$ when $F\neq\emptyset$ can be done using the same argument as above with slight modifications. Namely, each connected component of $\Gamma_{V,F}$ can be identified with a single vertex and a similar sign reversing involution can be defined for the graph $\Gamma^{(1)}/F$ whose vertex set has cardinality $c(F)$. \end{proof}

Let us return to our previous example with $\Gamma$ generated by  the facets $\{1,2,3\}$ and $ \{3,4\}$. Using the information in Table~\ref{antipodeTab} we obtain the expression in Figure \ref{antipode}. Looking at the expression for the antipode in this example, we see that if we  add all the coefficients together we obtain $1$. It turns out that the sum of the coefficients of the antipode of a simplicial complex is always $(-1)^n$ where $n$ is the number of vertices of the simplicial complex.  We will derive this fact using characters and quasisymmetric functions in the next section (see Corollary~\ref{sumIsNeg1Cor}).

\begin{table}
\begin{center}
  \begin{tabular}{| l | c | c |}
 \hline
   $F\in \F(\Gamma^{(1)})$ & $(-1)^{c(F)}$ & $a(\Gamma^{(1)}/F)$\\ \hline
    $\emptyset$ & $(-1)^4$ & 12 \\ \hline
    $\{1,2\}$ & $(-1)^3$  & 4 \\\hline
    $\{1,3\}$ & $(-1)^3$  & 4 \\ \hline
    $\{2,3\}$ & $(-1)^3$  & 4 \\ \hline
    $\{3,4\}$ & $(-1)^3$  & 6 \\ \hline
    $\{1,2\},\{3,4\}$ & $(-1)^2$  &2 \\ \hline
    $\{1,3\},\{3,4\}$ & $(-1)^2$  &2 \\ \hline
    $\{2,3\},\{3,4\}$ & $(-1)^2$  &2 \\ \hline
    $\{1,2\},\{1,3\},\{2,3\}$ & $(-1)^2$  &2 \\ \hline
    $\{1,2\},\{1,3\},\{2,3\},\{3,4\}$ & $(-1)^1$  & 1 \\  
\hline
  \end{tabular}
\caption{Information to compute the antipode of $\Gamma$.} \label{antipodeTab}
\end{center}
\end{table}


 \begin{figure}
 \centering
 \includegraphics[width=4.5in]{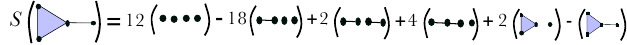}
 \caption[antipode]{Antipode of an element in $A_4$.}
\label{antipode}
\end{figure}

Now, note that once we have computed $S(\Gamma)$, we can easily find the antipode of the simplicial complex $\Gamma^{(1)}$ by just taking the 1-skeleton of each of the terms in the sum for the antipode.  So we immediately get that 
 $$
S(\Gamma^{(1)})=12 \overline{K_4}-18 (K_2\uplus \overline{K_2})+2(K_2\uplus K_2) + 4(P_3\uplus K_1)+ 2(K_3\uplus  K_1)-\Gamma^{(1)}.
$$
 where $K_n$ is the complete graph on $n$ vertices, $\overline{K_n}$ is the complement of the complete graph on $n$ vertices, and $P_n$ is the path on $n$ vertices. 

More generally, let $\A^{(k)}$ be the $\K$-linear span of isomorphism classes of simplicial complexes of dimension at most $k$.  That is, complexes $\Gamma\in\A$ such that $\Gamma^{(k)} = \Gamma$.
For each $k\geq 0$, we define the map
\begin{align*}
\phi_k:\;&\A\rightarrow\A ^{(k)}\\
&\;\Gamma\mapsto\Gamma^{(k)}
\end{align*}
 which takes the $k$-skeleton of a simplicial complex.
We extend this map linearly to all of $\A$.

\begin{prop}\label{prop:subalg}For any nonnegative integer $k$, $\A^{(k)}$ is a Hopf subalgebra of $\A$ and the map $\phi_k:\A \to \A^{(k)}$ is a Hopf algebra homomorphism.
\end{prop}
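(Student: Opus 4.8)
The plan is to first check that $\A^{(k)}$ is a graded connected sub-bialgebra of $\A$, and then verify that $\phi_k$ is a morphism of bialgebras; compatibility with the antipode will then follow automatically. Throughout, the one observation I will lean on is that both of the operations appearing in $m$ and $\Delta$ can only lower dimension: for the product, $\dim(\Gamma\uplus\Theta)=\max(\dim\Gamma,\dim\Theta)$, and for the induced subcomplex, every face $X\cap T$ of $\Gamma_T$ satisfies $X\cap T\subseteq X$, so $\dim(X\cap T)\le\dim X\le\dim\Gamma$.

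\textbf{Step 1: $\A^{(k)}$ is a Hopf subalgebra.} First I would observe that $\varnothing\in\A^{(k)}$, so $u(\K)\subseteq\A^{(k)}$, and that $\A^{(k)}$ is closed under $m$ by the dimension formula for disjoint unions above. Next, since every $\Gamma_T$ (hence every $\Gamma_{V(\Gamma)-T}$) has dimension at most $\dim\Gamma$, the formula $\Delta(\Gamma)=\sum_{T\subseteq V(\Gamma)}\Gamma_T\otimes\Gamma_{V(\Gamma)-T}$ shows $\Delta(\A^{(k)})\subseteq\A^{(k)}\otimes\A^{(k)}$, and the counit obviously restricts. As $\A^{(k)}$ inherits the grading by number of vertices and has $A_0\cap\A^{(k)}\cong\K$, it is a graded connected sub-bialgebra, and any such is automatically closed under the antipode, since the recursion defining $S$ stays inside $\A^{(k)}$. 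Alternatively, closure under $S$ is immediate from Theorem~\ref{anti:thm}, as each term $\Gamma_{V,F}$ is a subcomplex of $\Gamma$ and therefore has dimension at most $\dim\Gamma\le k$.

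\textbf{Step 2: $\phi_k$ is a bialgebra morphism.} That $\phi_k$ preserves the unit and the counit is clear, and it is grade-preserving since taking the $k$-skeleton does not change the vertex set. For the algebra structure, the faces of dimension at most $k$ in $\Gamma\uplus\Theta$ are exactly those of $\Gamma^{(k)}$ together with those of $\Theta^{(k)}$, giving $(\Gamma\uplus\Theta)^{(k)}=\Gamma^{(k)}\uplus\Theta^{(k)}$, i.e. $\phi_k\circ m=m\circ(\phi_k\otimes\phi_k)$. The coalgebra condition $\Delta\circ\phi_k=(\phi_k\otimes\phi_k)\circ\Delta$ is the heart of the matter: comparing the two sides term by term over $T\subseteq V(\Gamma)$, it reduces to the single identity $(\Gamma_T)^{(k)}=(\Gamma^{(k)})_T$, asserting that forming the induced subcomplex on $T$ commutes with taking the $k$-skeleton. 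I expect this identity to be the main obstacle, though it is not deep. The inclusion $(\Gamma^{(k)})_T\subseteq(\Gamma_T)^{(k)}$ holds because a face $Y\cap T$ with $Y\in\Gamma^{(k)}$ has $\dim(Y\cap T)\le\dim Y\le k$. For the reverse inclusion I would use that simplicial complexes are closed under taking subsets: given a face $X\cap T$ of $(\Gamma_T)^{(k)}$, so $X\in\Gamma$ with $|X\cap T|\le k+1$, set $Y=X\cap T$; then $Y\subseteq X$ forces $Y\in\Gamma$, the bound $|Y|\le k+1$ gives $Y\in\Gamma^{(k)}$, and $Y\cap T=Y=X\cap T$ exhibits $X\cap T$ as a face of $(\Gamma^{(k)})_T$.

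\textbf{Step 3: Compatibility with the antipode.} Finally, since $\phi_k$ is now known to be a morphism of bialgebras between the two Hopf algebras $\A$ and $\A^{(k)}$, it automatically intertwines their antipodes, $\phi_k\circ S=S\circ\phi_k$, by uniqueness of the convolution inverse of the identity map. Hence $\phi_k$ is a Hopf algebra homomorphism, which together with Step 1 completes the argument.
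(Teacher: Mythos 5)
Your proof is correct and takes essentially the same approach as the paper: both arguments rest on the dimension bounds $\dim(\Gamma\uplus\Theta)=\max(\dim\Gamma,\dim\Theta)$ and $\dim\Gamma_T\le\dim\Gamma$ for the subalgebra claim, and then verify $\phi_k$ is a bialgebra map via the identities $(\Gamma\uplus\Theta)^{(k)}=\Gamma^{(k)}\uplus\Theta^{(k)}$ and $(\Gamma_T)^{(k)}=(\Gamma^{(k)})_T$. You simply spell out details the paper leaves terse, namely the two inclusions behind the skeleton--restriction identity and the standard fact that a bialgebra morphism between Hopf algebras automatically intertwines antipodes.
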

\begin{proof}
Let $\Gamma$ and $\Theta$ be simplicial complexes. Since $\dim \Gamma \uplus \Theta = \max \{ \dim \Gamma, \dim \Theta\}$  and  $\dim \Gamma_T \leq \dim \Gamma$ for any $T \subseteq V(\Gamma)$ it follows that $\A^{(k)}$ is a Hopf subalgebra.
Observe that
$$
(\Gamma \uplus \Theta)^{(k)} = \{X : X \in \Gamma \uplus \Theta, |X| \leq k+1\} = \{X \in \Gamma: |X|\leq k+1\} \cup \{X \in \Theta: |X| \leq k+1\}.
$$
Therefore $(\Gamma \uplus \Theta)^{(k)} =\Gamma^{(k)} \uplus \Theta^{(k)}$ and $\phi_k$ is an algebra homomorphism.
Next, since $$ (\Gamma_T)^{(k)}  = \{X \in \Gamma: X \subseteq T, |X| \leq k+1\} = (\Gamma^{(k)})_T$$ we have
$$\sum_{T \subseteq V(\Gamma)} (\Gamma_T)^{(k)} \otimes (\Gamma_{V(\Gamma) - T})^{(k)} = \sum_{T \subseteq V(\Gamma)} (\Gamma^{(k)})_T \otimes (\Gamma^{(k)})_{V(\Gamma) - T}$$
and so $\phi_k$ is also a coalgebra homomorphism. We conclude that  $\phi_k$ is a Hopf algebra homomorphism.
\end{proof}

Using Proposition~\ref{prop:subalg} along with the fact that for any Hopf algebra homomorphism $\beta: \H_1 \to \H_2$ one has $\beta(S_{\H_1}(h)) = S_{\H_2}(\beta(h))$ for all $h \in \H_1$  (see \cite[Proposition 1.46]{dr:hac}) we can conclude that $S \circ \phi_k = \phi_k \circ S$.
This means that if $k$ is a nonnegative integer  and $S(\Gamma) = \sum c_i \Gamma_i$, then $S(\Gamma^{(k)})=\sum c_i\Gamma_i^{(k)}$.

\section{Characters and quasisymmetric functions}\label{charAndColorSec}

Now that we have endowed $\A$ with a Hopf algebra structure, we will proceed to define a family of characters on $\A$. This will give rise to a family of combinatorial Hopf algebras. We will then show how these characters give combinatorial information about simplicial complexes.

\subsection{The Hopf algebra $\Q Sym$}
We review some key facts about characters and quasisymmetric functions.
More details can be found in~\cite{abs:chagdse}. The Hopf algebra of quasisymmetric functions $\Q Sym$ is graded as $\Q Sym = \bigoplus_{n \geq 0} \Q Sym_n$ where $\Q Sym_n$ is spanned linearly over $\K$ by $\{M_{\alpha}\}_{\alpha \vDash n}$. Here $M_{\alpha}$ is defined by
$$
M_{\alpha} := \sum_{i_1 < i_2 < \cdots < i_l} x_{i_1}^{\alpha_1} x_{i_2}^{\alpha_2} \cdots x_{i_\ell}^{\alpha_\ell}
$$
where $\alpha = (\alpha_1,\dots,\alpha_{\ell})$ is a composition of $n$. The basis given by $\{M_{\alpha}\}$ is known as the \emph{monomial basis of $\Q Sym$}.
We have $M_{()} = 1$,  which spans $\Q Sym_0$, where $()$ is  the composition of 0 with no parts.

Let the map $\zeta_{\Q}: \Q Sym \rightarrow \K$  be defined as $\zeta_{\Q}(f) = f(1,0,0,\dots)$ for a quasisymmetric function  $f(x_1,x_2,x_3,\dots)$. Given that $\zeta_{\Q}$ is an evaluation map, it is also an algebra map and hence a character of $\Q Sym$. This endows $\Q Sym$ with a combinatorial Hopf algebra structure. Moreover, Theorem 4.1 of~\cite{abs:chagdse} states that given a combinatorial Hopf algebra $(\H, \zeta)$ there is a unique combinatorial Hopf algebra homomorphism
$$\Psi_{\zeta}: \H \to \Q Sym$$
given by
\begin{equation}
\Psi_{\zeta}(h) = \sum_{\alpha=(\alpha_1,\dots, \alpha_\ell) \vDash n} \zeta_{\alpha}(h) M_{\alpha}
\label{eq:Psi}
\end{equation}
 for $h$ homogeneous of degree $n$, where $\zeta_{\alpha}$ is the composition of functions
$$\H \xrightarrow{\Delta^{(\ell-1)}} \H^{\otimes \ell} \longrightarrow H_{\alpha_1} \otimes H_{\alpha_2} \otimes \cdots \otimes H_{\alpha_\ell} \xrightarrow{\zeta^{\otimes \ell}} \K.$$
Here the unlabeled map is the canonical projection and $\alpha = (\alpha_1, \alpha_2, \dots, \alpha_\ell)$. 


Now, for each $s > 0$, define the map $\zeta_s:\A\rightarrow \mathbb K$ by  $$\zeta_s(\Gamma) = \begin{cases} 1 & \dim \Gamma < s, \\ 0 & \dim \Gamma \geq s, \end{cases}$$ and extend linearly to $\A$. Each map $\zeta_s$ is multiplicative, i.e. $\zeta_s(\Gamma\uplus\Theta)=\zeta_s(\Gamma)\zeta_s(\Theta)$. Thus, for each $s$ the pair $(\A,\zeta_s)$ is a combinatorial Hopf algebra. Moreover, since $\A$ is cocommutative, equation~(\ref{eq:Psi}) implies that $\Psi_{\zeta}$ is actually a symmetric function. In particular, 
\begin{equation}
\Psi_{\zeta}(h) = \sum_{\lambda \vdash n} \zeta_{\lambda}(h) m_{\lambda} \;\;\;\;\;\;\;\;\text{where}\;\;\;\;\;\;\;m_{\lambda}=\sum_{\alpha}{M_{\alpha}}
\label{eq:monomial}
\end{equation}
summing over all the compositions $\alpha$ that can be rearranged to the partition $\lambda$. For instance, the compositions $(1,2)$ and $(2,1)$ rearrange to the partition $(2,1)$.

%
%

Next we will review some concepts concerning colorings of simplicial complexes. This will allow us to connect the quasisymmetric functions  associated to $\A$ with such colorings.

\subsection{Colorings}

Let $\mathbb P$ denote the set of positive integers and let $G$ be a graph with vertex set $V(G)$. A \emph{coloring} of $G$ is a map $f:V(G)\rightarrow\P$. We refer to $f(u)$ as the color of $u$.
A \emph{proper coloring} of $V$ is a coloring such that $f(u)\neq f(v)$ whenever $uv$ is an edge of $G$.  Given a simplicial  complex $\Gamma$ and $s\in\N$, define an $s$\emph{-simplicial coloring}\footnote{In ~\cite{dmn:vcsc} the authors use the term $(P,s)$-coloring for an $s$\emph{-simplicial coloring} which uses some palette of colors $P \subseteq \P$. To avoid confusion with terminology in graphs, we have adopted the term $s$\emph{-simplicial coloring.}} to be a coloring of $V(\Gamma)$ such that there is no monochromatic face of dimension $s$.
Notice that any 1-simplicial coloring of $\Gamma$ is simply a proper coloring of its 1-skeleton $\Gamma^{(1)}$. In Figure~\ref{coloringex} we use our earlier example and depict two colorings of $\Gamma$ using the colors $\{x,y,z\}\subseteq\P$.  
 
\begin{figure}
\begin{center}
\includegraphics[width=2.5in]{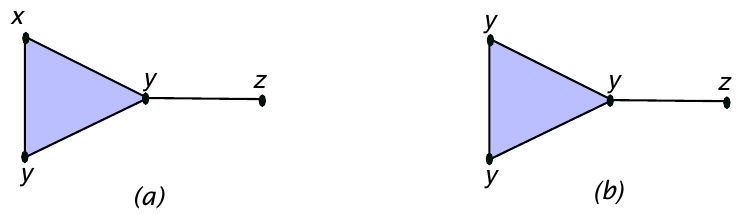}
\end{center}
\caption{A 2-coloring of $\Gamma$ in \emph{(a)} and a 3-coloring of $\Gamma$ in $(b)$.} \label{coloringex}
\end{figure}

Given a graph $G$, the number of proper colorings $f:V(G) \to \{1,2,\dots,t\}$ is the well-known chromatic polynomial, $\chi(G; t)$.
For a simplicial complex $\Gamma$ the number of $s$-simplicial colorings $f: V(\Gamma) \to \{1,2,\dots,t\}$ is called the \emph{$s$-chromatic polynomial}, $\chi_s(\Gamma;t)$, and defined in~\cite{n:scp,MN}. Although it is not obvious that $\chi_s(\Gamma;t)$ is a polynomial, we will see that this is the case once we realize it as a principal specialization of a certain symmetric function.

Stanley provided a generalization (see~\cite{s:sfgcpg}) of the chromatic polynomial of a graph $G$ by defining
$$
\psi(G; x_1, x_2, \dots) = \sum_{f} \prod_{i \geq 1} x_i^{|f^{-1}(i)|}
$$
where the sum is over proper colorings $f: V \to \mathbb P$. This formal power series is known as \emph{Stanley's chromatic symmetric function}.
For a simplicial complex $\Gamma$ we define the \emph{$s$-chromatic symmetric function} as
$$\psi_s(\Gamma; x_1, x_2, \dots) = \sum_{f}\prod_{i \geq 1} x_i^{|f^{-1}(i)|}$$
where now the sum is over  $s$-simplicial colorings $f: V \to \mathbb P$ and $V=V(\Gamma)$. Notice that when $s=1$ we obtain Stanley's chromatic symmetric function. Given $f(x_1,x_2,\dots)\in\Q Sym$ its \emph{principal specialization at $t$} is defined by $ps^{1}(f)(t)=f(1,\dots,1,0,0,\dots)$ where $t\in\mathbb P$ and only the first $t$ variables are specialized to 1. It turns out that $ps^{1}(f)(t)$ gives rise to a \emph{unique} polynomial in $t$ (see \cite[Proposition 7.7]{dr:hac}). The $s$-chromatic polynomial $\chi_s(\Gamma;t)$ is the polynomial determined $ps^{1}(\psi_s(\Gamma))(t)$. 
We now show how the $s$-chromatic symmetric function arises from the CHA $(\A, \zeta_s)$.

\begin{thm}
Fix $s$ and consider the combinatorial Hopf algebra $(\A, \zeta_s)$.  If $\Gamma$ is a simplicial complex, then  $\Psi_{\zeta_s}(\Gamma) = \psi_s(\Gamma; x_1, x_2, \dots)$.

\end{thm}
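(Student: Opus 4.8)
The plan is to compare the expansions of both sides in the monomial basis of $\Q Sym$. By equation~(\ref{eq:Psi}) we have $\Psi_{\zeta_s}(\Gamma) = \sum_{\alpha \vDash n} (\zeta_s)_\alpha(\Gamma)\, M_\alpha$, where $n = |V(\Gamma)|$ and $(\zeta_s)_\alpha$ is the composite $\zeta_s^{\otimes k} \circ (\text{projection}) \circ \Delta^{k-1}$ for $\alpha = (\alpha_1,\dots,\alpha_k)$. So it suffices to prove, for every composition $\alpha \vDash n$, that $(\zeta_s)_\alpha(\Gamma)$ equals the coefficient of $M_\alpha$ in $\psi_s(\Gamma)$.

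First I would unwind the iterated coproduct. The key algebraic fact is the transitivity of induced subcomplexes, namely $(\Gamma_T)_{T'} = \Gamma_{T'}$ whenever $T' \subseteq T \subseteq V(\Gamma)$, which follows directly from the definition $\Gamma_T = \{X \cap T : X \in \Gamma\}$. Combined with coassociativity, this yields
$$\Delta^{k-1}(\Gamma) = \sum_{V(\Gamma) = T_1 \sqcup \cdots \sqcup T_k} \Gamma_{T_1} \otimes \cdots \otimes \Gamma_{T_k},$$
the sum being over ordered set partitions of $V(\Gamma)$ into $k$ (possibly empty) blocks. Since $\Gamma_{T_j}$ is a complex on $|T_j|$ vertices, the projection onto $A_{\alpha_1} \otimes \cdots \otimes A_{\alpha_k}$ retains exactly the terms with $|T_j| = \alpha_j$ for all $j$, and then $\zeta_s^{\otimes k}$ multiplies the values $\zeta_s(\Gamma_{T_j})$.

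The crucial step is to translate $\zeta_s(\Gamma_{T_j})$ into a combinatorial condition. By definition $\zeta_s(\Gamma_{T_j}) = 1$ precisely when $\dim \Gamma_{T_j} < s$, that is, when every face of $\Gamma_{T_j}$ has at most $s$ vertices (recall $\dim X = |X|-1$). As the faces of $\Gamma_{T_j}$ are the sets $X \cap T_j$ with $X \in \Gamma$, this holds if and only if $|X \cap T_j| \le s$ for every face $X \in \Gamma$. Hence $\prod_{j} \zeta_s(\Gamma_{T_j})$ equals $1$ exactly when the ordered partition $(T_1,\dots,T_k)$ has each block meeting each face of $\Gamma$ in at most $s$ vertices, and $0$ otherwise. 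Therefore $(\zeta_s)_\alpha(\Gamma)$ counts ordered set partitions of $V(\Gamma)$ into blocks of sizes $\alpha_1,\dots,\alpha_k$ subject to this intersection bound.

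Finally I would identify this count with the $M_\alpha$-coefficient of $\psi_s(\Gamma)$. Such an ordered partition is the same data as a coloring $f$ using colors $1,\dots,k$ with $f^{-1}(j) = T_j$, so that color $j$ is used $\alpha_j$ times; the block condition $|X \cap T_j| \le s$ for all faces $X$ is exactly the requirement that $f$ be an $s$-simplicial coloring. Thus $(\zeta_s)_\alpha(\Gamma)$ is the number of $s$-simplicial colorings of $\Gamma$ using each of the colors $1,\dots,k$ with multiplicities $\alpha_1,\dots,\alpha_k$, which is the coefficient of the monomial $x_1^{\alpha_1}\cdots x_k^{\alpha_k}$ in $\psi_s(\Gamma)$. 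Since relabeling colors carries $s$-simplicial colorings to $s$-simplicial colorings, $\psi_s(\Gamma)$ is quasi-symmetric and this coefficient coincides with its $M_\alpha$-coefficient. Matching the two expansions gives the theorem. The main obstacle is the third step: correctly establishing the iterated coproduct formula (via the transitivity identity) and the dimension-to-intersection translation, since these are precisely the points where the character $\zeta_s$ is seen to encode the $s$-simplicial condition color class by color class; the remaining matching is bookkeeping.
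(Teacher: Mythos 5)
Your proposal is correct and follows essentially the same route as the paper's proof: expand $\Psi_{\zeta_s}(\Gamma)$ in the monomial basis via equation~(\ref{eq:Psi}), identify $(\zeta_s)_\alpha(\Gamma)$ as the count of ordered set partitions $(T_1,\dots,T_k)$ of $V(\Gamma)$ with $|T_j|=\alpha_j$ and $\dim\Gamma_{T_j}<s$, and then observe that such partitions are exactly the color classes of $s$-simplicial colorings. The paper states these steps more tersely; your write-up merely makes explicit the details it leaves implicit (the unwinding of $\Delta^{k-1}$ via transitivity of induced subcomplexes, and the quasi-symmetry of $\psi_s$ needed to read off $M_\alpha$-coefficients).
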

\begin{proof}
Consider the  formula in equation~(\ref{eq:Psi}).
Given a simplicial complex $\Gamma \in A_n$ and a composition $\alpha =( \alpha_1, \alpha_2, \dots, \alpha_\ell) \vDash n$, we get that the coefficient of $M_{\alpha}$ is the number of ordered set partitions $V_1 \uplus V_2 \uplus \cdots \uplus V_\ell$ of $V(\Gamma)$ such that $|V_i| = \alpha_i$ and $\dim \Gamma_{V_i} < s$ for each $i$.
In an $s$-simplicial coloring, every element of a subset $T$ of $V(\Gamma)$ can be assigned the same color if and only if $\dim \Gamma_T<s$.
Thus the coefficient of $M_{\alpha}$ counts $s$-simplicial colorings using only colors $\{j_1 < j_2 < \cdots < j_\ell\} \subseteq \P$ where $|f^{-1}(j_i)| = \alpha_i$ for each $i$.
The result follows.
\end{proof}

We discuss now the expansion of the symmetric function $\Psi_{\zeta_s}(\Gamma; x_1, x_2, \dots)$ in terms of the \emph{power sum basis}. The \emph{power sum symmetric function} of degree $n$, denoted by $p_{n}$, in the variables $x_1,x_2,\dots$ is given by $p_n=\sum_{i\geq 1}x_i^n$ and for $\lambda=(\lambda_1,\dots,\lambda_{\ell})$ an integer partition define 
$$
p_{\lambda}:=p_{\lambda_1}\cdots  p_{\lambda_{\ell}}.
$$

Take a simplicial complex $\Gamma$ and let $V = V(\Gamma)$.
For any $s>0$, we denote the collection of $s$-simplices of $\Gamma$ by $F_s(\Gamma)$.
Given any $A \subseteq F_s(\Gamma)$ we let $\Gamma_{V,A}$ be the simplicial complex on the vertex set $V$ generated by $A$.
That is, the faces of $\Gamma_{V,A}$ of dimension greater than 0 are subsets $X \subseteq V$ such that $X \subseteq Y$ for some $Y \in A$.
For $A \subseteq F_s(\Gamma)$ we define an integer partition $\lambda(A)$  which has length the number of connected components of $(\Gamma_{V,A})^{(1)}$ and whose parts are given by the number of vertices in each connected component. The $s$-chromatic symmetric function has the following expansion in the power sum basis
\begin{equation}
\Psi_{\zeta_s}(\Gamma) = \sum_{A \subseteq F_s(\Gamma)} (-1)^{|A|} p_{\lambda(A)}
\label{eq:p}
\end{equation}
which can be shown analogously to~\cite[Theorem 2.5]{s:sfgcpg}.


 Let $\Gamma$ denote the  $(n-1)$-simplex, i.e., the simplicial complex on $[n]$ whose only facet is the set $[n]$ itself.  We now look at the monomial and Schur expansions of $\Psi_{\zeta_s}(\Gamma)$.  We have

$$
\Psi_{\zeta_s}(\Gamma) = \sum_{\mu\vdash n\atop\mu_1\leq s}{n\choose{\mu_1,\cdots,\mu_{\ell}}}m_{\mu}
$$
where, for every $s\geq 1$, the sum is over partitions $\mu=(\mu_1,\cdots,\mu_{\ell})$ of $n$ such that $\mu_1\leq s$.


 In the above case when $s=n$ we get
$$
\Psi_{\zeta_n}(\Gamma) = \sum_{\lambda\vdash n}f^{\lambda}s_{\lambda}
$$
where $f^{\lambda}$ is the number of standard Young tableaux of shape $\lambda$. This follows since
\begin{equation*}
\sum_{\mu\vdash n}{n\choose{\mu_1,\cdots,\mu_{\ell}}}m_{\mu}= (m_{(1)})^n= (s_{(1)})^n =   \sum_{\lambda\vdash n}f^{\lambda}s_{\lambda}.  
\end{equation*}
Moreover, since $m_{(n)}=p_{n}=\sum_{i=0}^n(-1)^{i}s_{(n-i,1^i)}$ we conclude that the symmetric function 
$$
\Psi_{\zeta_{n-1}}(\Gamma) = \sum_{\lambda\vdash n}f^{\lambda}s_{\lambda}-m_{(n)}
$$
is Schur positive as well. Unfortunately, the functions $\Psi_{\zeta_s}(\Gamma) $ are not always Schur positive. An instance of this is when $n=4$ and $s=2$.  In this case,
\begin{align*}
\Psi_{\zeta_s}(\Gamma) &= 6m_{(2,2)}+12m_{(2,1,1)}+24m_{(1,1,1,1)}\\
&= 6s_{(2,2)} + 6s_{(2,1,1)}-6s_{(1,1,1,1)}.
\end{align*}
It would be interesting to determine other families of simplicial complexes that give Schur positivity of the functions $\Psi_{\zeta_s}$ for different values of $s$.

\subsection{Acyclic orientations and chromatic polynomial evaluations}

In this subsection, we use our antipode formula along with the characters defined above to interpret certain evaluations of  the $s$-chromatic polynomial. Given any character $\zeta:\A\rightarrow\mathbb K$, the following identity holds (see \cite[Section 1]{abs:chagdse})
$$
\zeta^{-1}=\zeta\circ S
$$
where $S$ is the antipode in $\A$ and $\zeta^{-1}$ is the inverse of $\zeta$ under convolution. In other words, 
$\zeta^{-1}\zeta=u\circ\epsilon
$ where $\zeta^{-1}\zeta=m\circ(\zeta^{-1}\otimes\zeta)\circ\Delta$.

Now, since $ps^{1}(\psi_s(\Gamma))(t)=\chi_s(\Gamma;t)$, using ~\cite[Proposition 7.7 (iii)]{dr:hac} yields
\begin{equation}\label{eval:minus1}
\zeta_s\circ S(\Gamma)=\zeta_s^{-1}(\Gamma)=ps^{1}(\psi_s(\Gamma))(-1)=\chi_s(\Gamma;-1).
\end{equation}
This allows us to prove the following theorem.
\begin{thm}\label{genacyclic}
Let $\Gamma \in A_n$ be a simplicial complex and let $s$ be a positive integer.  Then 
$$\chi_s(\Gamma;-1) = \sum_{\substack{F \in \F(\Gamma^{(1)}) \\ \dim \Gamma_{V,F} < s}} (-1)^{c(F)}a(\Gamma^{(1)}/F).$$
\label{prop:-1}
\end{thm}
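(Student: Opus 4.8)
The plan is to read the statement off as a single character evaluation of the antipode. All three ingredients needed are already in place: the cancellation-free antipode formula of Theorem~\ref{anti:thm}, the convolution identity $\zeta_s^{-1} = \zeta_s \circ S$ recalled just before the statement, and the specialization in equation~(\ref{eval:minus1}) identifying $\chi_s(\Gamma;-1)$ with $\zeta_s^{-1}(\Gamma)$. So the proof is essentially a matter of assembling these facts in the right order.

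First I would start from equation~(\ref{eval:minus1}) to write $\chi_s(\Gamma;-1) = \zeta_s^{-1}(\Gamma)$, and then apply $\zeta_s^{-1} = \zeta_s \circ S$ to obtain $\chi_s(\Gamma;-1) = \zeta_s(S(\Gamma))$. This converts the left-hand side entirely into the problem of evaluating the character $\zeta_s$ on the antipode $S(\Gamma)$.

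Next I would substitute the antipode formula from Theorem~\ref{anti:thm},
$$S(\Gamma) = \sum_{F \in \F(\Gamma^{(1)})} (-1)^{c(F)} a(\Gamma^{(1)}/F)\, \Gamma_{V,F},$$
and use the $\K$-linearity of $\zeta_s$ to move the character inside the sum, giving
$$\chi_s(\Gamma;-1) = \sum_{F \in \F(\Gamma^{(1)})} (-1)^{c(F)} a(\Gamma^{(1)}/F)\, \zeta_s(\Gamma_{V,F}).$$
Finally I would evaluate each term from the definition of $\zeta_s$: since $\zeta_s(\Gamma_{V,F}) = 1$ when $\dim \Gamma_{V,F} < s$ and $\zeta_s(\Gamma_{V,F}) = 0$ otherwise, exactly the flats $F$ with $\dim \Gamma_{V,F} < s$ survive, each contributing with coefficient $1$, which leaves precisely the claimed sum.

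There is no genuine obstacle here, since the real content lives in Theorem~\ref{anti:thm} and in the identity~(\ref{eval:minus1}); the argument is a direct computation. The only point worth verifying is that $\zeta_s$ acts on each (possibly disconnected) basis complex $\Gamma_{V,F}$ simply by testing whether its dimension, namely the maximum dimension among its faces, is less than $s$. This is immediate from the definition of $\zeta_s$ and requires only its linearity, not the multiplicativity coming from its being a character.
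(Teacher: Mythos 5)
Your proposal is correct and follows exactly the same route as the paper's own proof: equation~(\ref{eval:minus1}) combined with $\zeta_s^{-1} = \zeta_s \circ S$, then the antipode formula of Theorem~\ref{anti:thm} and linearity of $\zeta_s$ to reduce the sum to flats with $\dim \Gamma_{V,F} < s$. Nothing is missing; the final remark about $\zeta_s$ acting by a dimension test on each basis complex is precisely how the paper's last step works.
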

\begin{proof}
Using equation (\ref{eval:minus1}), the fact that $\zeta_s^{-1}=\zeta_s\circ S$, and our antipode formula in Theorem~\ref{anti:thm} yields
\begin{align*}
\chi_s(\Gamma;-1) &= \zeta_s(S(\Gamma))\\
&= \sum_{F \in \F(\Gamma^{(1)})} (-1)^{c(F)} a(\Gamma^{(1)}/F) \zeta_s(\Gamma_{V,F})\\
&= \sum_{\substack{F \in \F(\Gamma^{(1)}) \\ \dim \Gamma_{V,F} < s}} (-1)^{c(F)}a(\Gamma^{(1)}/F)
\end{align*}
and so the result is proven.
\end{proof}

This result shows that like the chromatic polynomial for graphs, the evaluation at $t=-1$ of the $s$-chromatic polynomial for simplicial complexes has a combinatorial interpretation in terms of counting acyclic orientations. If we let $s=1$  in Theorem~\ref{prop:-1}, we  recover Stanley's classical result~\cite{s:aog} that $\chi_1(\Gamma; -1) = (-1)^n a(\Gamma^{(1)})$.  In~\cite[Example 3.3]{hm:ihag} the authors perform  the same calculation with characters for the Hopf algebra of graphs.  In addition to the previous result, we also get the following corollary.

\begin{cor}\label{sumIsNeg1Cor} Let $\Gamma$ be a simplicial complex on $n$ vertices,  then we have the following
$$
 (-1)^n =  \sum_{F \in \F(\Gamma^{(1)})} (-1)^{c(F)} a(\Gamma^{(1)}/F).
$$
\end{cor}
\begin{proof}

If we take $s > \dim \Gamma$, then $\chi_s(\Gamma; t) = t^n$ since there is no restriction on coloring.
So, $\chi_s(\Gamma; -1) = (-1)^n$. 
Meanwhile, the sum in Theorem~\ref{prop:-1} runs over all $F \in \F(\Gamma^{(1)})$ because the condition $\dim \Gamma_{V,F} < s$ is always satisfied.
\end{proof}

\subsection{The $f$-vector}\label{fvector}

 Given a simplicial complex $\Gamma$, the \emph{$f$-vector} of $\Gamma$ is  defined to be $(f_0,f_1,\dots )$ where $f_s$ is the number of $s$-simplices in $\Gamma$.  
 For example, if $\Gamma$ is the simplicial complex generated by the facets $\{1,2,3\}$ and  $\{3,4\}$, then $\Gamma$ has $f$-vector $(4,4,1,0,0,\dots)$.
In this section we show how to obtain the $f$-vector of a simplicial complex from the symmetric functions $\{\Psi_{\zeta_s}\}_{s>0}$.


Let  $[p_{\lambda}] \Psi_{\zeta_s}(\Gamma)$ denote the coefficient of $p_{\lambda}$ in the power sum expansion of $\Psi_{\zeta_s}(\Gamma)$.
If $\Gamma$ is a simplicial complex on $n$ vertices and $A \subseteq F_s(\Gamma)$, then $\lambda(A) = (s+1,1^{n-s-1})$ if only if $A$ consists of a single $s$-simplex.
By considering equation~(\ref{eq:p}) we obtain the following proposition.

\begin{prop}
If $\Gamma$ is a simplicial complex with $|V(\Gamma)| = n$ and $s>0$, then
$$
f_{s} = -[p_{(s+1,1^{n-s-1})}]\Psi_{\zeta_s}(\Gamma).
$$
\label{prop:f}
\end{prop}



Given a simplicial complex $\Gamma$, denote its $s^{th}$ homology group  by $H_s(\Gamma)$ for each $s \geq 0$.
The $s^{th}$ Betti number is denoted $\beta_s(\Gamma)$ and defined to be the rank of $H_s(\Gamma)$.
One useful fact about homology groups is that if $\dim \Gamma = k$, then $H_s(\Gamma) = 0$ for $s > k$.
In particular, this means $\beta_s(\Gamma) = 0$ for $s > k$. 

Note that  Proposition~\ref{prop:f} allows us to recover the Euler characteristic $\chi_{\Gamma}$ of $\Gamma$, since $\chi_{\Gamma}=\sum_{s \geq 0}(-1)^s f_s=\sum_{s \geq 0} (-1)^s \beta_s$ where  $\beta_s = \beta_s(\Gamma)$.
 Since we can determine the $f$-vector from the $s$-chromatic symmetric functions, it is natural to wonder if we  can also determine the Betti numbers. If $\Gamma$ is a graph, i.e. if $\dim(\Gamma)\leq 1$, then $\beta_{0}$ equals the number of its connected components. This number can also be recovered by means of the chromatic polynomial of $\Gamma$. Thus, in this case we recover the sequence of Betti numbers $(\beta_0, \beta_1,0,0,...)$.  However, for higher dimensional simplicial complexes this is not always the case as we see in the next example.

%
%
%
%
%

\begin{ex} We now consider two simplicial complexes $\Gamma$ and $\Theta$ such that $\Psi_{\zeta_s}(\Gamma) = \Psi_{\zeta_s}(\Theta)$ for all $s > 0$, but $\Gamma$ and $\Theta$ have different Betti numbers.
We set $\Gamma = X \uplus Y$ and $\Theta = Z \uplus W$ where $X$, $Y$, $Z$, and $W$ are given in Table~\ref{XYZWTab}.
The simplicial complexes $Y$ and $Z$ are shown Figure~\ref{betti_ex}.

\begin{figure}
\begin{center}
\includegraphics[width=2.5in]{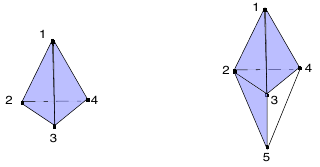}
\end{center}
\caption{\small{$Z$ (left) has 2-simplicies $123, 124, 134, 234$ and $Y$ (right)  has 2-simplices $123,124,134,235$.}} \label{betti_ex}
\end{figure}


Since $\Gamma^{(1)}=\Theta^{(1)}$, it follows $\Psi_{\zeta_1}(\Gamma) = \Psi_{\zeta_1}(\Theta)$. Also,
$$\Psi_{\zeta_2}(\Gamma) = (p_{(1^4)})(p_{(1^5)} - 4p_{(3,1^2)} + 3p_{(4,1)}) = p_{(1^9)} - 4p_{(3,1^6)} + 3p_{(4,1^5)}$$
$$\Psi_{\zeta_2}(\Theta) = (p_{(1^4)} - 4p_{(3,1)} + 3p_{(4)})(p_{(1^5)}) = p_{(1^9)} - 4p_{(3,1^6)} + 3p_{(4,1^5)}$$
and  so $\Gamma$ and $\Theta$ have the same $2$-chromatic symmetric function.

\begin{table}
\begin{center}
  \begin{tabular}{| l | c | c |}
 \hline
    & Vertices & Facets\\ \hline
    $X$ & $1,2,3,4$ & $12,13,14,23,24,34$\\ \hline
    $Y$ & $1,2,3,4,5$  &  $123,124, 134, 235, 45$\\ \hline
    $Z$ & $1,2,3,4$  & $123,124,134,234$ \\ \hline
    $W$ & $1,2,3,4,5$ & $12,13,14,23,24,25,34,35,45$\\
\hline
  \end{tabular}
\caption{Vertices and facets of the complexes $X, Y, Z,$ and $W.$} \label{XYZWTab}
\end{center}
\end{table}

Since $\Gamma$ and $\Theta$ are both 2-dimensional simplicial complexes,  we conclude $\Psi_{\zeta_s}(\Gamma)\ = \Psi_{\zeta_s}(\Theta)$ for all $s > 0$.
However, the Betti numbers of $\Gamma$ and $\Theta$ are not the same since $\beta_2(\Theta) = 1$ while $\beta_2(\Gamma) = 0$.

\end{ex}

\section{The even and odd subalgebras}\label{evenodd}

For a CHA $(\H, \zeta)$, where $\H = \bigoplus_{n\geq 0 } H_n$, the even and odd subalgebras, denoted by $S_{+}(\H, \zeta)$ and  $S_{-}(\H, \zeta)$ respectively, were originally defined in~\cite{abs:chagdse}.
Let $\bar{\zeta}$ denote the character defined by $\bar{\zeta}(h) = (-1)^n \zeta(h)$ for a homogenous element $h \in H_n$.
Let $h \in H_n$ be any homogenous element, then $h \in S_{+}(\H, \zeta)$ if and only if either of the following equivalent conditions is satisfied 
\begin{align}
(\text{Id} \otimes (\bar{\zeta} - \zeta) \otimes \text{Id}) \circ \Delta^{(2)}(h) &= 0 &(\text{Id} \otimes (\bar{\zeta}^{-1} \zeta - \epsilon) \otimes \text{Id}) \circ \Delta^{(2)}(h) &= 0.
\label{eq:even}
\end{align}
Recall, products of characters like $\bar{\zeta}^{-1} \zeta$ refer to the convolution product.
Similarly, for a homogeneous element $h \in \H$ we have $h \in S_{-}(\H, \zeta)$ if and only if either of the following equivalent conditions is satisfied 
\begin{align}
(\text{Id} \otimes (\bar{\zeta} - \zeta^{-1}) \otimes \text{Id}) \circ \Delta^{(2)}(h) &= 0 &(\text{Id} \otimes (\bar{\zeta} \zeta - \epsilon) \otimes \text{Id}) \circ \Delta^{(2)}(h) &= 0.
\label{eq:DehnSommerville}
\end{align}
Either equation in~(\ref{eq:DehnSommerville}) is called the generalized Dehn-Sommerville relations for the CHA $(\H, \zeta)$.

Let us analyze the even subalgebra of $(\A,\zeta_s)$ for any $s$.
The equation in the left in~(\ref{eq:even}) involves $\bar{\zeta_s} - \zeta_s$ which is always nonpositive.
This means no cancellation can occur.
So, for a simplicial complex $\Gamma$ this implies $\Gamma \in S_+(\A, \zeta_s)$ if and only if $(\bar{\zeta_s} - \zeta_s)(\Gamma_A) = 0$ for all $A \subseteq V(\Gamma)$.
It follows that $\Gamma  \in S_+(\A, \zeta_s)$ if and only if $\Gamma = \varnothing$.

Classifying when a simplicial complex is in the odd subalgebra is more difficult because the equations in~(\ref{eq:DehnSommerville}) can contain both positive and negative terms. Let $\E(\A, \zeta_s)$ denote the $\K$-space given by 
$$\E(\A, \zeta_s)=\text{span}_{\K}\{\Gamma\; | \; (\bar{\zeta_s} \zeta_s)(\Gamma_A) = \epsilon(\Gamma_A) \text{ for all }A\subseteq V(\Gamma)\}.$$
 Observe that if $\Gamma\in\E(\A, \zeta_s)$ then $\Gamma\in S_-(\A, \zeta_s).$ This can be checked from (\ref{eq:DehnSommerville}). 
 Now we provide some lemmas that will allow us to describe some of the elements in $\E(\A, \zeta_s)$, and hence, in $S_-(\A, \zeta_s)$. 



\begin{lem}
If $\Gamma$ is a simplicial complex and $\dim \Gamma < s$, then $\Gamma \in \E(\A, \zeta_s)$. 
\label{lem:dim}
\end{lem}

\begin{proof}
If $\dim \Gamma < s$,  then 
\begin{align*}
(\bar{\zeta_s} \zeta_s)(\Gamma) &= \sum_{A \uplus B = V(\Gamma)} \bar{\zeta_s}(\Gamma_A) \zeta_s(\Gamma_B) \\
&= \sum_{A \subseteq V(\Gamma)} (-1)^{|A|}  \\
&= \delta_{\Gamma, \varnothing}\\
&= \epsilon(\Gamma).
\end{align*}
When $\dim \Gamma < s$, then also $\dim \Gamma_A < s$ for any $A \subseteq V(\Gamma)$.
Thus $\Gamma \in \E(\A, \zeta_s)$.
\end{proof}

\begin{lem}
If $\Gamma$ is the $s$-simplex, then $(\bar{\zeta_s} \zeta_s)(\Gamma) = -(1+(-1)^{s+1})$.
\label{lem:simplex}
\end{lem}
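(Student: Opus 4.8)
The plan is to expand the convolution $\bar{\zeta_s}\zeta_s(\Gamma)$ directly through the coproduct of $\A$ and reduce it to a single alternating binomial sum. Writing $\Gamma$ for the $s$-simplex, its vertex set $V(\Gamma)$ has $s+1$ elements and its only facet is all of $V(\Gamma)$, so $\dim \Gamma = s$. The structural observation I would use is that every induced subcomplex of a simplex is again a full simplex: for $T \subseteq V(\Gamma)$ the complex $\Gamma_T$ is the $(|T|-1)$-simplex, and hence $\dim \Gamma_T = |T|-1$. This converts the question of when $\zeta_s$ vanishes into a purely numerical condition on the size of $T$.

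Next I would unfold the definition of the convolution product together with the coproduct on $\A$, obtaining
$$
\bar{\zeta_s}\zeta_s(\Gamma) = \sum_{A \uplus B = V(\Gamma)} \bar{\zeta_s}(\Gamma_A)\,\zeta_s(\Gamma_B).
$$
By the observation above, $\zeta_s(\Gamma_B) = 1$ precisely when $|B| \leq s$, and $\bar{\zeta_s}(\Gamma_A) = (-1)^{|A|}$ precisely when $|A| \leq s$, both vanishing otherwise. Since $|A| + |B| = s+1$, the two constraints $|A| \leq s$ and $|B| \leq s$ are equivalent to $|B| \geq 1$ and $|A| \geq 1$, so a term survives exactly when $A$ is a nonempty proper subset, i.e. $1 \leq |A| \leq s$. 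Grouping the surviving terms by $k = |A|$ collapses the sum to
$$
\bar{\zeta_s}\zeta_s(\Gamma) = \sum_{k=1}^{s} (-1)^k \binom{s+1}{k}.
$$

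Finally I would evaluate this using the identity $\sum_{k=0}^{s+1}(-1)^k\binom{s+1}{k} = (1-1)^{s+1} = 0$, stripping off the two boundary terms $k=0$ and $k=s+1$ to obtain $-\binom{s+1}{0} - (-1)^{s+1}\binom{s+1}{s+1} = -(1+(-1)^{s+1})$, which is the claimed value. I do not expect a genuine obstacle here; the computation is short and self-contained. The only point demanding care is the bookkeeping of the two extremal subsets $A = V(\Gamma)$ and $A = \varnothing$, since these are exactly where the actual character differs from the all-ones behavior exploited in Lemma~\ref{lem:dim}, and mishandling them would corrupt the parity-dependent answer. Indeed, an alternative and perhaps cleaner route is to start from the computation in the proof of Lemma~\ref{lem:dim}, where $\sum_{A \subseteq V(\Gamma)}(-1)^{|A|} = 0$, and merely correct for the two boundary terms on which $\dim\Gamma_A = s$ forces $\zeta_s$ to vanish; this recovers $-(1+(-1)^{s+1})$ with almost no calculation.
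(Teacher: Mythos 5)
Your proof is correct and follows essentially the same route as the paper: expand the convolution $\bar{\zeta_s}\zeta_s$ through the coproduct, observe that only the terms with $1 \leq |A| \leq s$ survive, and evaluate $\sum_{k=1}^{s}(-1)^k\binom{s+1}{k} = -(1+(-1)^{s+1})$ via the vanishing alternating sum. The paper's proof is just a terser version of this same computation, so your added justification of which terms survive is a faithful filling-in of its omitted steps.
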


\begin{proof}
We compute for $\Gamma$ the $s$-simplex
\begin{align*}
(\bar{\zeta_s} \zeta_s)(\Gamma) &= \sum_{A \uplus B = V(\Gamma)} \bar{\zeta_s}(\Gamma_A) \zeta_s(\Gamma_B) \\
&= \sum_{k=1}^s (-1)^k \binom{s+1}{k}\\
&= -(1 + (-1)^{s+1}).
\end{align*}
\end{proof}


Lemma~\ref{lem:dim} shows that $\A^{(s-1)}$ is contained is $\E(\A, \zeta_s)$ for every $s$.
Recall, $\A^{(k)}$ is the Hopf subalgebra spanned by simplicial complexes of dimension at most $k$ from Proposition~\ref{prop:subalg}.
Lemma~\ref{lem:simplex} implies that $\E(\A, \zeta_s) = \A^{(s-1)}$ when $s$ is odd.
However, the $s$-simplex is an element of $\E(\A, \zeta_s)$ when $s$ is even.



\begin{prop}
If $s$ is odd and $\Gamma$ is a simplicial complex, then $\Gamma \in S_{-}(\A, \zeta_s)$ if and only if $\dim \Gamma < s$.
\end{prop}
\begin{proof}
If $\dim \Gamma < s$, then $\Gamma \in S_{-}(\A, \zeta_s)$ by Lemma~\ref{lem:dim}.
So, it remains to show if $\dim \Gamma \geq s$, then $\Gamma \not\in S_{-}(\A, \zeta_s)$ for $s$ odd.
Recall that $\Gamma \in S_{-}(\A, \zeta_s)$ if and only if
$$\sum_{A \uplus B \uplus C = V(\Gamma)} \Gamma_A \otimes (\bar{\zeta_s} \zeta_s - \epsilon)(\Gamma_B) \otimes \Gamma_C = 0.$$
It follows that $\Gamma \in S_{-}(\A, \zeta_s)$ if and only if
$$\sum_{\substack{A \uplus B \uplus C = V(\Gamma)\\|B| = k}} \Gamma_A \otimes (\bar{\zeta_s} \zeta_s - \epsilon)(\Gamma_B) \otimes \Gamma_C = 0$$
for all $k$.
If $|B| = s+1$ for $s$ odd, then by Lemma~\ref{lem:dim} and Lemma~\ref{lem:simplex},
\begin{equation*}
(\bar{\zeta_s} \zeta_s - \epsilon)(\Gamma_B) =
\begin{cases}
-2 & \dim \Gamma_B = s,\\
0 & \mathrm{otherwise.}
\end{cases}
\end{equation*}
When $s$ is odd and $\dim \Gamma \geq s$
$$\sum_{\substack{A \uplus B \uplus C = V(\Gamma)\\|B| = s+1}} \Gamma_A \otimes (\bar{\zeta_s} \zeta_s - \epsilon)(\Gamma_B) \otimes \Gamma_C \ne 0$$
since the exists at least one subset $B \subseteq V(\Gamma)$ such that $|B| = s+1$ and $\dim \Gamma_B = s$.
Therefore $\Gamma \not\in S_{-}(\A, \zeta_s)$ when $s$ is odd and $\dim \Gamma \geq s$.
\end{proof}

We now provide two more lemmas which are useful in computing $\bar{\zeta_s} \zeta_s$ and hence useful in determining when a simplicial complex is in $S_{-}(\A,\zeta_s)$.

\begin{lem}
If $\Gamma$ is a simplicial complex such that $|V(\Gamma)|$ is odd, then $(\bar{\zeta_s} \zeta_s)(\Gamma) = \epsilon(\Gamma)$.
\label{lem:odd}
\end{lem}
\begin{proof}
First note if $|V(\Gamma)|$ is odd then $\Gamma \ne \varnothing$ and $\epsilon(\Gamma) = 0$.
Now,
\begin{align*}
(\bar{\zeta_s} \zeta_s)(\Gamma) &= \sum_{A \uplus B = V(\Gamma)} \bar{\zeta_s}(\Gamma_A) \zeta_s(\Gamma_B) \\
&= \sum_{\substack{A \uplus B = V(\Gamma) \\ \dim \Gamma_A < s \\ \dim \Gamma_B < s}} (-1)^{|A|} \\
&= 0
\end{align*}
since for $A \uplus B = V(\Gamma)$ the term $\bar{\zeta_s}(\Gamma_A) \zeta_s(\Gamma_B)$ will cancel the term $\bar{\zeta_s}(\Gamma_B) \zeta_s(\Gamma_A)$  as $|A|$ and $|B|$ have different parity.
\end{proof}

\begin{lem}
If $\Gamma$ is a simplicial complex and $x \in V(\Gamma)$ is not contained in any $s$-dimensional face of $\Gamma$, then  $(\bar{\zeta_s} \zeta_s)(\Gamma) = \epsilon(\Gamma)$.
\label{lem:face}
\end{lem}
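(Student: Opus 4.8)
The plan is to follow the same computation as in the proofs of Lemma~\ref{lem:dim} and Lemma~\ref{lem:odd}, reducing the Euler character to an alternating sum over subsets, and then to annihilate that sum with a sign-reversing involution built from the distinguished vertex $x$.

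First I would expand the convolution product through the coproduct of $\A$ to write
$$
\bar{\zeta_s} \zeta_s(\Gamma) = \sum_{A \uplus B = V(\Gamma)} \bar{\zeta_s}(\Gamma_A)\zeta_s(\Gamma_B) = \sum_{\substack{A \subseteq V(\Gamma) \\ \dim \Gamma_A < s \\ \dim \Gamma_{V(\Gamma)-A} < s}} (-1)^{|A|},
$$
using $\bar{\zeta_s}(\Gamma_A) = (-1)^{|A|}\zeta_s(\Gamma_A)$ together with the fact that $\zeta_s(\Gamma_A)\zeta_s(\Gamma_B)$ equals $1$ exactly when both induced complexes have dimension below $s$. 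Since $\Gamma$ contains the vertex $x$, it is nonempty, so $\epsilon(\Gamma) = 0$ and it suffices to show the displayed alternating sum vanishes. Before doing so I would record a strengthening of the hypothesis: because $\Gamma$ is closed under taking subsets, a face of dimension $\geq s$ through $x$ would contain a sub-face of dimension exactly $s$ through $x$; hence the assumption that $x$ lies in no $s$-dimensional face is equivalent to the statement that \emph{every} face of $\Gamma$ containing $x$ has dimension strictly less than $s$.

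Next I would introduce the involution $A \mapsto A \,\triangle\, \{x\}$ (toggling $x$ in or out of $A$) on the index set $\{A \subseteq V(\Gamma) : \dim \Gamma_A < s,\ \dim \Gamma_{V(\Gamma)-A} < s\}$. This map is fixed-point free and flips the parity of $|A|$, so once it is shown to be well defined the contributions cancel in pairs and the sum is $0$. The main obstacle, and the only place the hypothesis on $x$ is used, is verifying that toggling $x$ keeps $A$ inside the index set. Since removing a vertex cannot increase the dimension of an induced complex, the required inequality is automatic on whichever side loses $x$; the genuine point is that adjoining $x$ to a set $A$ with $\dim \Gamma_A < s$ still gives $\dim \Gamma_{A \cup \{x\}} < s$. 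To establish this I would inspect an arbitrary face $X \cap (A \cup \{x\})$ of $\Gamma_{A \cup \{x\}}$ for $X \in \Gamma$: if it avoids $x$ it equals $X \cap A$, of dimension $< s$; if it contains $x$ then it is a subset of the face $X$, hence itself a face of $\Gamma$ containing $x$, and so by the strengthened hypothesis its dimension is $< s$. Applying this argument to $A$ in the case $x \notin A$ and to $V(\Gamma) - A$ in the case $x \in A$ confirms that the involution preserves the index set. The alternating sum therefore vanishes, yielding $\bar{\zeta_s} \zeta_s(\Gamma) = 0 = \epsilon(\Gamma)$.
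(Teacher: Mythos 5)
Your proof is correct and is essentially the paper's own argument in different packaging: the paper splits the convolution sum according to whether $x$ lies in $A$ or in $B$ and cancels the two halves term by term using the identity $\zeta_s(\Gamma_{A \cup \{x\}}) = \zeta_s(\Gamma_A)$, which is precisely your toggle-$x$ involution restricted to the nonzero terms. Your face-by-face verification that adjoining $x$ preserves $\dim < s$ just supplies the detail the paper leaves implicit when it asserts that identity.
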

\begin{proof}
First note if $x \in V(\Gamma)$ exists then $\Gamma \ne \varnothing$ so $\epsilon(\Gamma) = 0$.
We compute
\begin{align*}
(\bar{\zeta_s} \zeta_s)(\Gamma) &= \sum_{A \uplus B = V(\Gamma)} \bar{\zeta_s}(\Gamma_A) \zeta_s(\Gamma_B) \\
&=\sum_{A \uplus B = V(\Gamma) \setminus \{x\}} \bar{\zeta_s}(\Gamma_{A \cup \{x\}}) \zeta_s(\Gamma_B) + \sum_{A \uplus B = V(\Gamma) \setminus \{x\}} \bar{\zeta_s}(\Gamma_{A}) \zeta_s(\Gamma_{B \cup \{x\}})\\
&=\sum_{A \uplus B = V (\Gamma) \setminus \{x\}} (-1)^{|A| + 1}\zeta_s(\Gamma_{A \cup \{x\}}) \zeta_s(\Gamma_B) + \sum_{A \uplus B = V(\Gamma)  \setminus \{x\}} (-1)^{|A|}\zeta_s(\Gamma_{A}) \zeta_s(\Gamma_{B \cup \{x\}})\\
&= 0
\end{align*}
where we use that $\zeta_s(\Gamma_{A \cup \{x\}}) = \zeta_s(\Gamma_{A})$ because $x$ is not in any $s$-dimensional face of $\Gamma$.
\end{proof}

\section{$q$-analogues}\label{qanalogue}
In this section we develop a $q$-analogue of the characters $\zeta_s$ that we defined earlier.  This will in turn allow us to define a $q$-analogue of the $s$-chromatic symmetric function.  We will see that this $q$-analogue can distinguish an infinite family of graphs which the chromatic symmetric function cannot.  Additionally, we will discuss identities that can be obtained using principal specializations of this $q$-analogue of the chromatic symmetric function.

Recall that $\mathbb{K}$ is a field of characteristic 0.
Let $F=\mathbb{K}[q]$ be the  the polynomial ring in the variable $q$. For a graph $G$, the \emph{rank} of $G$, denoted by $rk(G)$ is the number of edges in a maximal subforest of $G$.  Given $s\geq 1$ and a simplicial complex $\Gamma$, define the map $\zeta_{s,q}:  \mathcal{A}\rightarrow F$ by $\zeta_{s,q}(\Gamma) = q^{rk(\Gamma^{(1)})}\zeta_s(\Gamma)$ and extend linearly.  Since the rank of the disjoint union of two graphs is the sum of their ranks, we get that  $\zeta_{s,q}$ is a character of $\A$ over $F$.

It is clear that $\zeta_{s,1} = \zeta_s$ and since  the only graphs with rank zero are those comprised of only isolated vertices, $\zeta_{s,0}=\zeta_1$.  These  remarks imply that  $\Psi_{\zeta_{s,q}}$ is the $s$-chromatic symmetric function when $q=1$ and is Stanley's chromatic symmetric function when $q=0$.

For a graph $G$ the value of $s$ in $\zeta_{s,q}$ is irrelevant.  In light of this and to simplify notation, we will use $\psi(G)$ for $\Psi_{\zeta_{1}}(G)$ and $\psi^q(G)$ for $\Psi_{\zeta_{2,q}}(G)$.
The choice $s=2$ is arbitrary since $\Psi_{\zeta_{2,q}}(G) = \Psi_{\zeta_{s,q}}(G)$ for any $s \geq 2$.
Note that applying equation~\ree{eq:Psi} for the character $\zeta_{2,q}$ on graphs  implies that
\begin{equation} \label{psiqEq}
\psi^q(G) = \sum_{V(G) = V_1\uplus V_2\uplus\cdots \uplus V_\ell} q^{\sum_i rk(G_{V_i})}M_{(|V_1|,|V_2|,\dots, |V_\ell|)}
\end{equation}
where the sum over all ordered set partitions of the vertex set of $G$.

\subsection{Unicyclic graphs}

A natural question to ask about Stanley's chromatic symmetric function, $\psi(G)$, is if it can distinguish between nonisomorphic graphs.  In \cite{s:sfgcpg} Stanley provided an example of two nonisomorphic graphs with the same  chromatic symmetric function.  Even though $\psi(G)$ cannot distinguish between nonisomorphic graphs, it is still an open problem to determine if it can distinguish between nonisomorphic trees.  Some  results in this direction can be found in~\cite{mmw:odtcs}.     

In~\cite{os:gecsf}, the authors described a way to write $\psi(G)$  as a linear combination of chromatic symmetric functions of  other graphs provided the original graph contains a triangle.   Using this, they showed how to construct an infinite family of pairs of nonisomorphic  graphs with the same chromatic symmetric function.  It was shown  in~\cite[Corollary 5]{mmw:odtcs} that one can recover the degree sequence of a tree using $\psi(G)$. However, this is not the case for unicyclic graphs (i.e. graphs with exactly one cycle) as was shown in~\cite{os:gecsf}.  In fact, in~\cite{os:gecsf} it is shown that the chromatic symmetric function cannot be used to determine the the number of leafs of a unicyclic graph.  It turns out that $\psi^q$ can be used to determine the number of leaves for unicyclic graphs as well as the number of vertices of degree two in the cycle.   After showing this, we explain how this gives  an infinite family of pairs of unicyclic graphs with the same chromatic symmetric function, but with different symmetric functions $\psi^q$. 

%
%
%
%
%

Following the notation in~\cite{os:gecsf}, for a unicyclic graph $G$, let $L_G$ be the number of leaves in $G$ and let $I_G$ be the number of vertices in $G$ with degree two which are contained in the cycle.

\begin{lem}\label{unicyLem}
Let $G$ be a connected unicyclic graph with $n$ vertices.  Then the coefficient of $q^{n-2}m_{(n-1,1)}$  in $\psi^q$ is $L_G+I_G$.
\end{lem}

\begin{proof}
By considering equation~\ree{psiqEq}, and noting that $M_{(n-1,1)}$ and $M_{(1,n-1)}$ will have the same coefficient in $\psi^q(G)$, one can see  that the coefficient $q^{n-2}m_{(n-1,1)}$ is the number of vertices $v$ such that $G\setminus v$ has rank $n-2$.  We will show that $G\setminus v$ has rank $n-2$ if and only if 
\begin{enumerate}
\item $v$ is a leaf or
\item  $\deg(v)=2$ and $v$ is in the cycle.
\end{enumerate}
Since $G$ is connected, this is equivalent to showing that $G\setminus v$ is connected if and only if $v$ is a leaf or $\deg(v)=2$ and $v$ is in the cycle.  

Suppose that $G\setminus v$ is connected.  If $v$ is not in the cycle, then $v$ must be a leaf since removing any other vertex of a tree disconnects the tree.  On the other hand, if $v$ is in the cycle, but has degree larger than two, then $v$ must be adjacent to a vertex, $w$ not in the cycle.  However, if $v$ is removed, it will disconnect $w$ from the rest of the graph.   It follows that the degree of $v$ must be two.

Now suppose that $v$ is  a leaf, then it is clear that $G\setminus v$ is connected.   On the other hand if  $v$ is in the cycle with degree two, then $v$ is only adjacent to vertices in the cycle. It follows that $G\setminus v$ is connected.  
\end{proof}

It was shown  in the proof of~\cite[Proposition 4.1]{os:gecsf} that if $G$ is a connected unicyclic graph with $n$ vertices such that the cycle has length $p$, then $(-1)^n[(p-1)L_G+I_G]$ is the coefficient of $p_{(n-1,1)}$ in the power sum expansion of $\psi(G)$.   Since $\psi(G)$ is obtained from $\psi^q(G)$ by setting $q=0$, this is also the coefficient of  $p_{(n-1,1)}$ in $\psi^q(G)$.  From Lemma~\ref{unicyLem}, we also know the coefficient of $q^{n-2}m_{(n-1,1)}$ in $\psi^q(G)$ is $L_G+I_G$.
This gives a system of linear equations of the form,
\begin{align*}
(p-1)L_G+I_G =& c_1\\
L_G+I_G = & c_2.
\end{align*}
Since $p$ is the length of a cycle, $p>2$ and so this system of linear equations has a unique solution. Thus we get the following proposition.
\begin{prop}\label{leafCycleProp}
Let $G$ be a connected unicyclic graph.  Then both $L_G$ and $I_G$ can be determined by  $\psi^q(G)$.  In particular, if $G$ and $H$ are connected and unicyclic such that $\psi^q(G) = \psi^q(H)$, then $L_G=L_H$ and $I_G=I_H$.
\end{prop}

\begin{figure}
\begin{center}
\begin{tikzpicture}[scale=.56] 
\node[draw,circle , fill=black!, scale=.5] (a) at (-2,2) {$$};
\node at (-2,2.5) {$u$};
\node (e) at (-4,2) {$T_1$};
\draw[dashed] (a)--(e);
\node[draw,circle , fill=black!, scale=.5] (b) at (2,2) {$$};
\node at (2,2.5) {$v$};
\node (f) at (4,2) {$T_2$};
\draw[dashed] (b)--(f);

\node[draw,circle , fill=black!, scale=.5] (c) at (-2,-2) {$$};
\node at (-2,-2.5) {$z$};
\node (g) at (-4,-2) {$T_1$};
\draw[dashed] (c)--(g);

\node[draw,circle , fill=black!, scale=.5] (d) at (2,-2) {$$};
\node at (2,-2.5) {$w$}; 
\node (h) at (4,-2) {$T_2$};
\draw[dashed] (d)--(h);

\draw (d)--(a)--(c)--(d)--(b);

\node[draw,circle , fill=black!, scale=.5] (a) at (-2+10,2) {$$};
\node at (-2+10,2.5) {$u$};
\node (e) at (-4+10,2) {$T_1$};
\draw[dashed] (a)--(e);
\node[draw,circle , fill=black!, scale=.5] (b) at (2+10,2) {$$};
\node at (2+10,2.5) {$v$};
\node (f) at (4+10,2) {$T_2$};
\draw[dashed] (b)--(f);

\node[draw,circle , fill=black!, scale=.5] (c) at (-2+10,-2) {$$};
\node at (-2+10,-2.5) {$z$};
\node (g) at (-4+10,-2) {$T_1$};
\draw[dashed] (c)--(g);

\node[draw,circle , fill=black!, scale=.5] (d) at (2+10,-2) {$$};
\node at (2+10,-2.5) {$w$}; 
\node (h) at (4+10,-2) {$T_2$};
\draw[dashed] (d)--(h);

\draw (a)--(c)--(d)--(b)--(c);

\end{tikzpicture}
\end{center}
\caption{The unicyclic graphs from~\cite{os:gecsf} with the same chromatic symmetric function}\label{unicycFig}
\end{figure}
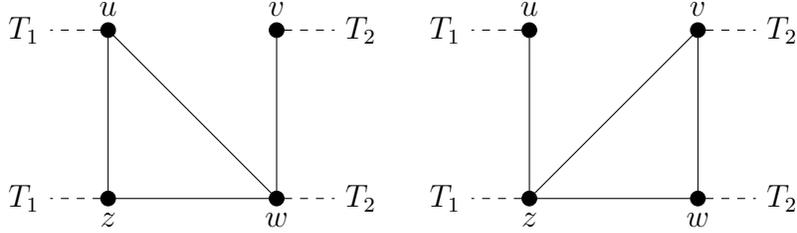

Now consider Figure~\ref{unicycFig}.  It is shown in~\cite{os:gecsf} that if $T_1$ and $T_2$ are nonisomorphic rooted trees which are attached by their root vertices, then the two graphs in the figure have the same chromatic symmetric function.  If we take exactly one of $T_1$ or $T_2$ to be the empty graph, 
 then the two graphs have a different number of leafs.  It follows that they can be distinguished by $\psi^q$ despite having the same chromatic symmetric function.

\subsection{Principal specializations and combinatorial identities}

For certain types of simplicial complexes,  the principal specialization of $\Psi_{s,-q}$ at $-1$ has a nice form which we will use to derive some identities involving compositions of integers.

First, we consider an identity that we can derive using the skeleton of a simplex.  Given $\alpha \vDash n$, we will use $\ell(\alpha)$ to denote the length of the composition $\alpha$. Moreover, we will use the notation  ${n\choose \alpha}$ for ${n \choose \alpha_1,\alpha_2,\dots, \alpha_{\ell(\alpha)}}$.  
\begin{prop}\label{completePrincSpec}
Let $\Gamma$ be a simplex on $n$ vertices.  As before, let $\Gamma^{(d)}$ be the $d$-skeleton of $\Gamma$.  
\begin{itemize}
\item[(a)] If $1\leq s\leq d$, then
$$
 ps^{1}(\Psi_{\zeta_{s,-q}}(\Gamma^{(d)}))(-1) =(-1)^n \sum_{\substack {(\alpha_1, \alpha_2,\dots, \alpha_\ell) \vDash n\\ \alpha_i\leq s \mbox{ } \forall i}} {n \choose \alpha} q^{n-\ell(\alpha)}
$$
where the sum is over all compositions $(\alpha_1, \alpha_2,\dots, \alpha_\ell)$  of $n$ such that $\alpha_i\leq s$ for all $i$.
\item [(b)] If $s>d\geq 1$, then 
$$
 ps^{1}(\Psi_{\zeta_{s,-q}}(\Gamma^{(d)}))(-1) =(-1)^n \sum_{\alpha \vDash n} {n \choose \alpha} q^{n-\ell(\alpha)}
$$
where the sum is over all compositions of $n$.
\end{itemize}
\end{prop}

\begin{proof}
We calculate the principal specialization directly.  Identify the vertex set of $\Gamma^{(d)}$ with $[n]$.  Since $s,d\geq 1$, the  one skeleton is a complete graph and so if $V_i$ is any nonempty subset of $[n]$, the rank of the one skeleton of $\Gamma^{(d)}|_{V_i}$ is $|V_i|-1$.  Therefore, equation~\ree{psiqEq} implies that
\begin{align*}
\Psi_{\zeta_{s,-q}}(\Gamma^{(d)})&= \sum_{(V_1,\dots, V _\ell)}  (-q)^{|V_1|-1}\cdots (-q)^{|V_\ell|-1} \zeta_s(\Gamma^{(d)}|_{V_1})\cdots  \zeta_s(\Gamma^{(d)}|_{V_\ell}) M_{(|V_1|,\dots, |V_\ell|)}\\
&= (-1)^n \sum_{(V_1,\dots, V_\ell)} (-1)^{\ell(\alpha)}q^{n-\ell(\alpha)} \zeta_s(\Gamma^{(d)}|_{V_1})\cdots  \zeta_s(\Gamma^{(d)}|_{V_\ell}) M_{(|V_1|,\dots, |V_\ell|)}.
\end{align*}
where the sum is over all ordered set partitions of $[n]$.

Now consider the induced subcomplex, $\Gamma^{(d)}|_{V_i}$.  If $s\leq d$, then 
$$
 \zeta_{s}(\Gamma^{(d)}|_{V_i}) =\begin{cases} 1 & \mbox{ if } |V_i|\leq s,\\ 0& \mbox{otherwise.} \end{cases}
$$
On the other hand, if $s>d$, then
$$
 \zeta_{s}(\Gamma^{(d)}|_{V_i})=1.
$$

Let $(V_1,\dots,V_\ell)$ be an ordered set partition of $[n]$ and denote by $\alpha=(\alpha_1,\dots,\alpha_\ell)$ the composition of $n$ obtained by putting $\alpha_1=|V_1|,\dots,\alpha_l=|V_\ell|$. We refer to $(V_1,\dots,V_\ell)$ as having \emph{type $\alpha$}.  There are 
$
{n \choose \alpha} 
$
ordered set partitions of $[n]$ with type $\alpha$.  The value of $  \zeta_s(\Gamma^{(d)}|_{V_1})\cdots \zeta_s(\Gamma^{(d)}|_{V_\ell}) M_{(|V_1|,\dots, |V_\ell|})$  depends on the type of $(V_1,\dots,V_\ell)$ and the value of $s$ and $d$.  In particular, if $s\leq d$ and $(V_1,\dots,V_\ell)$ has type  $(\alpha_1,\alpha_2,\dots, \alpha_\ell)$ then 
$$
\zeta_s(\Gamma^{(d)}|_{V_1})\cdots \zeta_s(\Gamma^{(d)}|_{V_\ell}) M_{(|V_1|,\dots, |V_\ell|}) =\begin{cases} M_{(\alpha_1,\dots, \alpha_\ell)} & \mbox{ if for all  $i$, } \alpha_i \leq s ,\\ 0 & \mbox{ otherwise.} \end{cases}
$$
It follows that  if $s\leq d$, 
$$
\Psi_{\zeta_{s,-q}}(\Gamma^{(d)}) = (-1)^n \sum_{\substack {(\alpha_1, \alpha_2,\dots, \alpha_\ell) \vDash n\\ \alpha_i\leq s \mbox{ } \forall i}}  (-1)^{\ell(\alpha)}{n \choose \alpha} q^{n-\ell(\alpha)} M_{\alpha}.
$$
Using a similar argument, if $s>d$, then 
$$
\Psi_{\zeta_{s,-q}}(\Gamma^{(d)}) =(-1)^n \sum_{\alpha \vDash n} (-1)^{\ell(\alpha)}{n \choose \alpha} q^{n-\ell(\alpha)} M_\alpha.
$$
Using the fact that $ps^1(M_{\alpha})(-1)=(-1)^{\ell(\alpha)}$ completes the proof.
\end{proof}

\begin{rem}
When $s>d$ in the previous proposition, the principal specialization is related to the $n^{th}$ Eulerian polynomial.  Let $ \mathfrak{S}_n$ be the symmetric group on $[n]$.  Given a permutation, $\omega = \omega_1\omega_2\cdots \omega_n\in  \mathfrak{S}_n$, define a descent of $\omega$ to be an index $i$ such that  $1\leq i<n$ and $\omega_{i}>\omega_{i+1}$.  Moreover, let $\mbox{des}(\omega)$ be the number of descents in $\omega$.  The \emph{$n^{th} $Eulerian polynomial} is given by
$$
A_n(q) = \sum_{\omega\in  \mathfrak{S}_n} q^{ \mbox{des}(\omega)}.
$$
Note that some authors (in particular Stanley~\cite{s:ec1}) define $A_n(q)$  with an exponent of $\mbox{des}(\omega) +1$ instead of $ \mbox{des}( \omega)$.  Although it is written slightly differently, exercise 1.133b in~\cite{s:ec1} shows that 
$$
A_n(q+1) =  \sum_{\alpha \vDash n} {n \choose \alpha} q^{n-\ell(\alpha)}.
$$
It follows from Proposition~\ref{completePrincSpec} part (b) that if $\Gamma$ is a simplex on $n$ vertices and $s>d$, then
$$
 ps^{1}(\Psi_{\zeta_{s,-q}}(\Gamma^{(d)}))(-1) =(-1)^n A_n(q+1).
$$

\end{rem}

Next we will look at specializations when the simplicial complex is a tree. Recall from equation~\ree{eval:minus1} that 
$$
(\zeta\circ S)(\Gamma)=  ps^{1}(\Psi_{\zeta}(\Gamma))(-1).
$$
Thus using Theorem \ref{anti:thm} yields
$$
 ps^{1}(\Psi_{\zeta_{s,q}}(\Gamma))(-1)  = \sum_{F\in \mathcal{F}(\Gamma^{(1)})}  (-1)^{c(F)} a(\Gamma^{(1)}/F)\zeta_{s,q}(\Gamma_{V,F}).
$$
If we instead consider $-q$, the previous equation shows that
\begin{align*}
 ps^{1}(\Psi_{\zeta_{s,-q}}(\Gamma))(-1)  &= \sum_{F\in \mathcal{F}(\Gamma^{(1)})}  (-1)^{c(F)} a(\Gamma^{(1)}/F)\zeta_{s,-q}(\Gamma_{V,F})\\
&=\sum_{F\in \mathcal{F}(\Gamma^{(1)})}  (-1)^{c(F)} a(\Gamma^{(1)}/F)(-1)^{rk(F)}\zeta_{s,q}(\Gamma_{V,F}).
\end{align*}
Since $|V(\Gamma)|= c(F) +rk(F)$ for any $F\in \mathcal{F}(\Gamma^{(1)})$ the following holds.
\begin{equation}\label{qPrincipalSpecialEq}
 ps^{1}(\Psi_{\zeta_{s,-q}}(\Gamma))(-1)  = (-1)^{|V(\Gamma)|}\sum_{F\in \mathcal{F}(\Gamma^{(1)})}  a(\Gamma^{(1)}/F)\zeta_{s,q}(\Gamma_{V,F})
\end{equation}

It turns out that for every tree on $n$ vertices, $\psi^q$ has the same principal specialization at $-1$.  In particular, we have the following.

\begin{prop}\label{treePrincProp}
Let $T_n$ be any tree with $n$ vertices and let $s>1$.  Then  the principal specialization at $-1$ is given by 
$$
 ps^{1}(\psi^{-q}(T_n))(-1)  = (-1)^n (q+2)^{n-1}.
$$
\end{prop}
\begin{proof}
First note that  every collection of edges of $T_n$ is a flat.  Moreover, when we contract  a flat with $k$ edges we get a tree with $n-k$ vertices.    Since $s>1$,  we have $\zeta_{s,q}(\Gamma_{V,F}) = q^{|F|}$. Therefore, equation~\ree{qPrincipalSpecialEq} gives
$$
 ps^{1}(\psi^{-q}(T_n))(-1)   = (-1)^n \sum_{F\subseteq E(T_n)}  a(T_n/F)q^{|F|}.
$$
 Since every subset of $E(T_n)$ is a flat and since the number of acyclic orientations of a tree with $m$ vertices is $2^{m-1}$,
$$
 ps^{1}(\psi^{-q}(T_n))(-1)   = (-1)^n \sum_{k=0}^{n-1}{n-1 \choose k} 2^{n-1-k}q^{k}
$$
which implies that
$$
 ps^{1}(\psi^{-q}(T_n))(-1)   = (-1)^n (q+2)^{n-1}.
$$
This completes the proof.
\end{proof}

We will now see how to derive an identity from the previous proposition. It will be useful to use a special type of tree to prove the result.  A \emph{star} with $n$ vertices is a tree with $n-1$ leaves and one central vertex which is adjacent to all other vertices.  It will be denoted by $St_n$.  We will also make use of \emph{falling factorials}.  Recall that the falling factorial is defined by $(x)_n = x(x-1)(x-2)\cdots (x-n+1)$.  Finally, given a composition $\alpha$ we will use the notation $\alpha^k$ for 
$$
\sum_{i=1}^{\ell(\alpha)} \alpha_i^k.
$$

\begin{cor}\label{starCor}
For all positive integers $k$ and $n$, we have
$$
(-1)^{n-1} \sum_{j=1}^k (-1)^j S(k,j) (n)_j = \sum_{\alpha \vDash n} (-1)^{\ell(\alpha)}{n \choose \alpha} \alpha^k
$$
where  $S(k,j)$ are the Stirling numbers of the second kind.
 \end{cor}
\begin{proof}\footnote{We thank the anonymous referee for the proof of Corollary~\ref{starCor}.}
  First, we calculate $ ps^{1}(\psi^{-q}(St_n))(-1)$ directly. Identify the vertex set of $St_n$ with $[n]$. Then for any ordered set  partition $(V_1,V_2,\dots, V_\ell)$  of $[n]$ of type $\alpha$, the induced subgraph with vertex set $V_i$ has rank $\alpha_i-1$ if the center vertex is in $V_i$ and 0 otherwise.  Considering all ordered set partitions of type $\alpha$ the number of times the center vertex appears in $V_i$ is given by $\frac{\alpha_i}{n}{n \choose \alpha}$.
Therefore,
$$
\psi^{-q}(St_n)=   \sum_{\alpha \vDash n} {n\choose \alpha}\left(\sum_{i=1}^{\ell(\alpha)} \frac{\alpha_i}{n} (-q)^{\alpha_i-1} M_{\alpha}\right).
$$
Again using that  $ps^1(M_{\alpha})(-1)=(-1)^{\ell(\alpha)}$, yields
$$
ps^{1}(\psi^{-q}(St_n))(-1) =   \sum_{\alpha \vDash n} {n\choose \alpha}\left(\sum_{i=1}^{\ell(\alpha)}(-1)^{\ell(\alpha)+\alpha_i-1} \frac{\alpha_i}{n} q^{\alpha_i-1} \right).
$$
Therefore using Proposition~\ref{treePrincProp}, we conclude that
$$
(-1)^n(q+2)^{n-1} =\sum_{\alpha \vDash n} {n\choose \alpha}\left(\sum_{i=1}^{\ell(\alpha)}(-1)^{\ell(\alpha)+\alpha_i-1} \frac{\alpha_i}{n} q^{\alpha_i-1} \right).
$$
Replacing $q$ by $-q$, multiplying both sides by $n$  and simplifying gives,
$$
-n(q-2)^{n-1} =\sum_{\alpha \vDash n}(-1)^{\ell(\alpha)} {n\choose \alpha}\left(\sum_{i=1}^{\ell(\alpha)}  \alpha_i q^{\alpha_i-1} \right).
$$
Now we apply the differential operator $\displaystyle \left(\frac{d}{dq}\right)^{j-1}$ to both sides of  the previous equation. This implies 
$$
 -(n)_j (q-2)^{n-j} = \sum_{\alpha \vDash n}(-1)^{\ell(\alpha)} {n\choose \alpha}\left(\sum_{i=1}^{\ell(\alpha)}   (\alpha_i)_j q^{\alpha_i-j}\right).
$$
Setting $q=1$ yields,
\begin{equation}\label{starEq}
(n)_j (-1)^{n-j-1} = \sum_{\alpha \vDash n}(-1)^{\ell(\alpha)} {n\choose \alpha}\left(\sum_{i=1}^{\ell(\alpha)} (\alpha_i)_j \right).
\end{equation}
Using the definition of $\alpha^k$,
$$
\sum_{\alpha \vDash n} (-1)^{\ell(\alpha)}{n \choose \alpha} \alpha^k=\sum_{\alpha \vDash n} (-1)^{\ell(\alpha)}{n \choose \alpha} \sum_{i=1}^{\ell(\alpha)} \alpha_i^k.
$$
Recalling the well known fact that if $k>0$,  $\alpha_i^k =\sum_{j=1}^k S(k,j) (\alpha_i)_j$, we see that 
$$
\sum_{\alpha \vDash n} (-1)^{\ell(\alpha)}{n \choose \alpha} \alpha^k = \sum_{\alpha \vDash n} (-1)^{\ell(\alpha)}{n \choose \alpha} \sum_{i=1}^{\ell(\alpha)} \left(\sum_{j=1}^k S(k,j) (\alpha_i)_j\right).
$$
Rearranging the summation, we have
$$
\sum_{\alpha \vDash n} (-1)^{\ell(\alpha)}{n \choose \alpha} \alpha^k =  \sum_{j=1}^k S(k,j) \sum_{\alpha \vDash n} (-1)^{\ell(\alpha)}{n \choose \alpha} \sum_{i=1}^{\ell(\alpha)} (\alpha_i)_j.
$$
Applying equation~\ree{starEq} we obtain the equation
$$
\sum_{\alpha \vDash n} (-1)^{\ell(\alpha)}{n \choose \alpha} \alpha^k  = \sum_{j=1}^k S(k,j)(n)_j (-1)^{n-j-1}.
$$
Factoring out the $(-1)^{n-1}$ on the right-hand side completes the proof.
\end{proof}

\section{Acknowledgments}

This paper originated during Fall 2014 in the Reading Combinatorics Seminar at Michigan State University with the active participation of S. Dahlberg and K. Barrese.
\nocite{*}
\bibliographystyle{alpha}

\bibliography{arxiv_v2_simplicial}

\end{document}